\documentclass[11pt,reqno]{amsart}
\usepackage[margin=1.15in]{geometry}
\usepackage{amsmath,amssymb,amsthm,mathtools,booktabs,microtype}
\usepackage{xcolor}
\usepackage{graphicx}
\usepackage[colorlinks,linkcolor=blue!55!black,citecolor=blue!55!black,
            urlcolor=blue!55!black]{hyperref}

\theoremstyle{plain}
\newtheorem{theorem}{Theorem}[section]
\newtheorem{lemma}[theorem]{Lemma}
\newtheorem{proposition}[theorem]{Proposition}
\newtheorem{corollary}[theorem]{Corollary}

\theoremstyle{definition}

\newtheorem{remark}[theorem]{Remark}

\DeclareMathOperator{\Real}{Re}
\DeclareMathOperator{\Imag}{Im}
\DeclareMathOperator{\capa}{cap}
\DeclareMathOperator{\dist}{dist}

\DeclareMathOperator{\artanh}{artanh}
\newcommand{\D}{\mathbb D}
\newcommand{\C}{\mathbb C}
\newcommand{\Chat}{\widehat{\mathbb C}}
\newcommand{\U}{\mathcal U}
\newcommand{\Sclass}{\mathcal S}
\newcommand{\B}{\mathcal B}
\DeclarePairedDelimiter{\abs}{\lvert}{\rvert}
\DeclarePairedDelimiter{\norm}{\lVert}{\rVert}

\begin{document}
\title[Bounds for the univalent Bloch--Landau constant]
{Bounds for the univalent Bloch--Landau constant}
\author{Frank Wikstr\"om}
\address{Centre for Mathematical Sciences, Lund University,
Box 118, SE-221 00 Lund, Sweden}
\email{frank.wikstrom@math.lth.se}
\subjclass[2020]{Primary 30C25; Secondary 30C50, 30C85, 65G30, 68V20}
\keywords{Univalent functions, Bloch--Landau constant, covering theorems,
Grunsky inequalities, logarithmic capacity, computer-assisted proof,
interval arithmetic, Lean}

\begin{abstract}
The univalent Bloch--Landau constant $\U$ is the largest number such that,
for every univalent function $f$ on the unit disk, $f(\D)$ contains a disk of
radius $\U\abs{f'(0)}$. We prove $0.612242\le\U<0.6562681$. The previous lower
bound, $0.57088586$, came from growth estimates, which control disks centered
at $f(0)$. Our improvement rests on a different mechanism. After Landau's
normalization, a $2\times2$ compression of the Grunsky matrix constrains every
omitted value through the same two coefficients, and choosing the center of the
disk from these coefficients exploits this shared information. A short analytic
proof based on this idea gives $\U\ge0.592$; it has been formalized in Lean, with
the classical Grunsky and Fekete--Szeg\H{o} inequalities as the only external
inputs. Retaining complex quadratic forms of the matrix gives $\U\ge0.612242$
through a finite interval certificate, and this bound holds for every
normalized univalent function with $a_2=0$ and $\abs{a_3}\le1/3$. We also show
that no argument based on disks centered at $f(0)$ can give a bound above
$0.6$. For the upper bound we construct an explicit domain, bounded by Goodman's
first two slit generations, thirty analytic arcs and a circle. We certify
$\U<0.6562681$ with a capacity estimate for almost critical arcs, improving
the best published bound $0.6563937$.
\end{abstract}
\maketitle

\section{Introduction}
For a univalent function $f$ on the unit disk $\D$, let $R_f$ denote the
inradius of $f(\D)$, the supremum of the radii of disks contained in its
image. The \emph{univalent Bloch--Landau constant} is
\[
  \U=\inf \left\{\frac{R_f}{\abs{f'(0)}}:f\text{ univalent in }\D\right\}.
\]
Landau~\cite{Landau} introduced this constant and proved $\U>0.566$ in 1929.
Successive refinements by Reich~\cite{Reich}, Jenkins~\cite{Jenkins61},
Toppila~\cite{Toppila}, Zhang~\cite{Zhang}, Jenkins~\cite{Jenkins98},
Xiong~\cite{Xiong}, and Skinner~\cite{Skinner} raised this to
$0.57088586$~\cite[Theorem 3.2]{Skinner} in 2009.
Upper bounds come from explicit domains. Robinson~\cite{Robinson} showed
$\U<0.658$ in 1935, and constructions by Goodman~\cite{Goodman}, Beller and
Hummel~\cite{BellerHummel}, and Carroll and
Ortega-Cerd\`a~\cite{CarrollOrtegaCerda} lowered this to $0.6563937$ in 2009.

The distinction between a centered disk and a disk with freely chosen
center is central to our approach. For a normalized schlicht function,
put
\[
  d_f=\dist\bigl(0, \C\setminus f(\D)\bigr),
  \qquad
  R_f=\sup_{c\in f(\D)}\dist\bigl(c,\C\setminus f(\D)\bigr).
\]
A lower growth estimate for $\abs{f(z)}$ gives a lower bound for $d_f$, and
hence for $R_f$. The line of argument culminating in Skinner's result
uses Landau's normalization, an explicit lower growth estimate obtained
from the Fekete--Szeg\H{o} inequality, and an implicit improvement of that
estimate; see~\cite[Sections 2--3]{Skinner}. Its final estimate controls
the distance to the nearest omitted value measured from the origin.
The inradius, however, allows the center to move. A small value of $d_f$
need not force an equally small value of $R_f$: the relevant issue is
whether omitted values in other directions prevent a translation of the
disk.

We retain Landau's normalization and use only low-order coefficient
information, but keep the angular dependence of the omitted-value
constraints while choosing the center. A single $2\times2$ compression
of the Grunsky operator supplies both the supporting inequality for an
analytic proof and the complex matrix constraint for a
computer-assisted strengthening. The two results are as follows.

\begin{theorem}[Analytic bound]\label{thm:hand}
The univalent Bloch--Landau constant satisfies
\[
 \U \ge 0.592.
\]
\end{theorem}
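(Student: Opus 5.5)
We follow the strategy the introduction outlines. First apply Landau's normalization. Given univalent $f$, compose with a disk automorphism and translate so that the point $f(0)$ is replaced by a point near which the image is "locally largest". Concretely, choose $z_0\in\D$ maximizing $(1-\abs{z}^2)\abs{f'(z)}$, or pass to a limit of near-maximizers. The Koebe transform $g(z)=\bigl(f(\tfrac{z+z_0}{1+\bar z_0 z})-f(z_0)\bigr)/\bigl((1-\abs{z_0}^2)f'(z_0)\bigr)$ is then normalized schlicht with $\abs{g'(0)}$ no smaller relative to $R_f$. The first-order condition at the maximum forces $a_2(g)=0$. Since $R_g\,\abs{g'(0)}^{-1}\cdot\abs{(1-\abs{z_0}^2)f'(z_0)}=R_f$ and $\abs{(1-\abs{z_0}^2)f'(z_0)}\ge\abs{f'(0)}$, it suffices to prove $R_g\ge0.592$ for every $g\in\Sclass$ with $a_2=0$. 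For such $g$, Fekete--Szeg\H{o} bounds $a_3$; in fact the second-order maximality condition, or Fekete--Szeg\H{o} applied to $g$, gives $\abs{a_3}\le1/3$.

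Next, encode omitted values. For any $w\notin g(\D)$, the function $h_w=wg/(w-g)$ is again in $\Sclass$. Its coefficients are $b_2=a_2+1/w=1/w$ and $b_3=a_3+2a_2/w+1/w^2=a_3+1/w^2$. Take the $2\times2$ compression of the Grunsky operator on the first two coordinates, i.e.\ the Grunsky inequality $\bigl|\sum_{j,k\le2}\gamma_{jk}x_jx_k\bigr|\le\sum_{k\le2}\abs{x_k}^2/k$ for the odd square-root or for $\log(g(z)/z)$. The entries $\gamma_{jk}$ are polynomials in $a_2,a_3$, so for $h_w$ they are polynomials in $u=1/w$ and $a_3$. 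Testing with a suitable real one-parameter family of vectors $x$ (e.g.\ $x=(1,t e^{i\theta})$) gives a \emph{supporting inequality} of the form $\Real(\alpha u+\beta u^2+\dots)\le C$. This confines $u=1/w$ to a region $E(a_3)$ depending only on $a_3$. The key point is that all omitted values $w$ share the same $a_3$. In particular, Fekete--Szeg\H{o} for $h_w$ alone yields $\abs{a_3+1/w^2-\mu/w^2}\le\dots$, which is the classical route giving a lower bound for $\abs{w}$ depending on $\arg w$.

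Then choose the center. Write $a_3=\rho e^{i\phi}$ with $\rho\le1/3$. The omitted set is contained in $\Omega(a_3)=\{w:1/w\in E(a_3)\}$, which is anisotropic: it lies far out in some directions (roughly along $\arg w\approx(\phi+\pi)/2$ and its opposite) and closer in others. Take the center $c=s\,e^{i\psi}$ with $\psi$ and $s\ge0$ explicit functions of $\rho$ and $\phi$, shifted away from the closest parts of $\Omega(a_3)$. Then $c$ lies in $g(\D)$, since the segment $[0,c]$ avoids $\Omega$. It remains to show $\dist(c,\Omega(a_3))\ge0.592$. By rotation symmetry $g(z)\mapsto e^{-i\tau}g(e^{i\tau}z)$, which keeps $a_2=0$, we may take $\phi=0$. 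This reduces the problem to a two-variable inequality in $(\rho,\text{boundary angle})$, verifiable by calculus or by crude monotonicity estimates.

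The main obstacle is this last step: picking the test vector in the Grunsky form and the center shift $s(\rho)$ so that the resulting planar inequality holds with margin $0.592$ uniformly in $\rho\in[0,1/3]$. I would first try $s$ linear in $\rho$ and test vectors optimized only at the extreme $\rho=1/3$ and at $\rho=0$, where the bound should reduce to a symmetric estimate. The worst case is presumably at an intermediate $\rho$, where the margin is thinnest, and I would check it by a coarse grid combined with Lipschitz bounds.
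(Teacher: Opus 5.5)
Both your constraint on omitted values and your center depend only on $a_3$, and that information provably cannot give $0.592$. Your reduction to $g\in\Sclass$ with $a_2=0$, $\abs{a_3}\le1/3$ is legitimate (Theorem~\ref{thm:computer} gives $0.612242$ in that class), but only because its proof uses $a_4$.

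First, the Grunsky step does not behave as you describe. In the ordinary Grunsky matrix, $\gamma_{12}$ and $\gamma_{22}$ already involve $a_4$ and $a_5$. In the odd square-root transform, the $\{1,3\}$ block contains $b_{33}=J/2-A^3/24$, which involves the fourth coefficient. The $\{1,2\}$ block is diagonal and gives only $\abs{A}\le2$ and $\abs{Q}\le1$. For $h_w$ one has $Q=b_2^2-b_3=-a_3$, which does not depend on $w$, so what involves only $a_2,a_3$ yields just $\abs{w}\ge1/2$.

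More fundamentally, any valid constraint in terms of $(b_2,b_3)=(1/w,a_3+1/w^2)$ is implied by membership of this pair in the coefficient body of $\Sclass$. The resulting admissible set $\Omega^*(a_3)$ is invariant under $w\mapsto-w$ (via $F(z)\mapsto-F(-z)$), so translating the disk can gain only to second order. At $a_3=-1/3$ this is fatal:
\begin{itemize}
\item By L\"owner's formulas, for $\mu=2\lambda/(1-\lambda)$ a Fekete--Szeg\H{o} extremal has real coefficients $b_2=(2+\mu)e^{-\mu/2}$ and $b_3=\lambda b_2^2+1+2e^{-\mu}$, so $b_3-b_2^2=1-2(1+\mu)e^{-\mu}$. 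Choosing $(1+\mu)e^{-\mu}=2/3$, resp.\ $1/3$, and rotating shows that $\Omega^*(-1/3)$ contains $\pm p$ and $\pm iq$, where $p\approx0.5682$ and $q\approx0.7324$.
\item The function $g_0(z)=z/(1+z^2/3)$ has $a_2=0$, $a_3=-1/3$, and omits every $w$ with $1/w$ in the closed ellipse with semi-axes $4/3$ and $2/3$. So any disk $D(c,r)$ avoiding $\Omega^*(-1/3)$ lies in $g_0(\D)$, where $\abs{\Real w}<\sqrt3/2$ and $\abs{\Imag w}<3/2$.
\item If $\abs{\Imag c}\ge q$, then $\abs{\Imag c}\le3/2-r$ puts $iq$ or $-iq$ inside the disk. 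Otherwise, avoiding $\pm p$ and $\pm iq$ forces $\sqrt{r^2-(p-\abs{\Real c})^2}+\sqrt{r^2-(\Real c)^2}\le q$.
\item For $r=0.592$ the left side is concave in $\abs{\Real c}\in[0,\sqrt3/2-r]$ and is at least $0.758>q$ at both endpoints.
\end{itemize}
Hence no center whatsoever works at $a_3=-1/3$, and your method is capped near $(p^2+q^2)/(2q)\approx0.587$. The ``main obstacle'' you identify is therefore not a tuning problem.

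The missing idea is the quartic coefficient. The paper uses the $\{1,3\}$ compression \eqref{eq:M}. For an omitted value, its $(3,3)$ entry contains $J=-a$ with $a=a_4$. In polar coordinates $w=\rho e^{i\theta}$ this contributes the third harmonic $\Real(ae^{3i\theta})$, which is odd under $w\mapsto-w$.

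The same matrix applied to $f$ gives $\abs{a}\le2/3-s^2/2$. The center is then built from $a$, not from $a_3$: $c=(t^3/H_\lambda)(u_4-3iv_4)$. It is chosen so that the translation's first harmonic $\Real(ce^{-i\theta})$ equals $t^3(u_4\cos\theta-3v_4\sin\theta)/H_\lambda$. After this cancellation the $a_4$-term survives only as $-4\Real(ae^{i\theta})\sin^2\theta$, which is controlled together with the second harmonic coming from $s$.

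The paper's analytic proof also uses the Bloch growth bound $\abs{f(z)}\le\artanh\abs{z}$, which your reduction discards. It enters through the bounded-image transform that yields \eqref{eq:boundedQJ}, and it is what makes the centered Fekete--Szeg\H{o} argument work for $s\le0.196$.

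Finally, Fekete--Szeg\H{o} with $a_2=0$ gives only $\abs{a_3}\le1$; the bound $1/3$ comes from the Bloch normalization.
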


The proof treats one small-coefficient case and three further ranges of the
cubic coefficient, each with its own supporting parameter. Its scalar
estimates are a small number of explicit inequalities, collected in
Appendix~\ref{app:scalar}. Each holds with a margin of at least $10^{-4}$, and
no computer search or adaptive subdivision is needed. A Lean~4 formalization
proves Theorem~\ref{thm:hand} from two classical coefficient inequalities for
univalent functions, a $2\times2$ Grunsky inequality and the Fekete--Szeg\H{o}
inequality, which enter as explicit hypotheses. It covers Landau's
normalization, the bounded-image transform and all scalar
estimates~\cite{WikArchive}.

The stronger version of Theorem~\ref{thm:hand} requires computer assistance.
Let $\Sclass$ be the class of univalent functions
$f(z)=z+\sum_{n \ge 2}a_nz^n$ on $\D$.
\begin{theorem}[Computer-assisted bound]\label{thm:computer}
Every $f\in\Sclass$ with $a_2=0$ and $\abs{a_3} \le 1/3$ satisfies
\[
 R_f \ge \frac{306121}{500000}=0.612242.
\]
In particular, $\U \ge 0.612242$.
\end{theorem}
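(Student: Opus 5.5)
The plan has two parts. First, a reduction to the class in the statement via Landau's normalization. Second, a certified lower bound on $R_f$ obtained from the $2\times2$ Grunsky compression together with an explicit choice of center.

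\emph{Reduction.} Given any univalent $f$, normalize so that $f\in\Sclass$. Landau's normalization replaces $f$ by a Koebe transform $f_\zeta(z)=\bigl(f(\frac{z+\zeta}{1+\bar\zeta z})-f(\zeta)\bigr)/\bigl((1-\abs{\zeta}^2)f'(\zeta)\bigr)$, where $\zeta$ is chosen at a point maximizing $(1-\abs{\zeta}^2)\abs{f'(\zeta)}$. This is possible after a preliminary dilation, since the maximum is then attained in the interior. At a maximum, the derivative condition forces the second coefficient of $f_\zeta$ to vanish. The second-order condition on the Hessian of $\log\bigl((1-\abs{z}^2)\abs{f'(z)}\bigr)$ gives $\abs{a_3}\le 1/3$. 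The new function has the same image up to an affine map, and its derivative is at least the original scaled one, so $R_f/\abs{f'(0)}\ge R_{f_\zeta}$. Thus the ``in particular'' clause follows from the main statement, and the work is to bound $R_f$ on the class $a_2=0$, $\abs{a_3}\le 1/3$.

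\emph{Omitted-value constraints.} For each omitted value $w\notin f(\D)$, consider the bounded-image transform $g_w=wf/(w-f)\in\Sclass$. Its coefficients are $b_2=1/w$ and $b_3=a_3+1/w^2$. Applying the $2\times2$ Grunsky compression to the inverse function $1/g_w(1/z)$ yields a positive-semidefiniteness condition on a $2\times2$ Hermitian matrix. This condition is polynomial in $u=1/w$, $\bar u$, $a_3$, $\bar a_3$, and it is retained as a family of complex quadratic-form inequalities in a unit vector $(\xi_1,\xi_2)$, rather than only its scalar consequences. The result is that every omitted value satisfies $u\in K(a_3)$, where $K(a_3)=\{u: Q_{a_3,\xi}(u)\ge 0\ \forall\xi\}$ is an explicit compact set depending only on $a_3$. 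Since every omitted value $w$ lies in $1/K(a_3)$, the complement of $f(\D)$ is contained in the set $1/K(a_3)$. Hence
\[
R_f\ \ge\ \sup_{c}\ \dist\bigl(c,\,1/K(a_3)\bigr)
\]
for any center $c$ with $c\in f(\D)$. The center $c$ lies in $f(\D)$ automatically if $\dist(c,1/K)>0$ and $c$ is joined to $0$ by a segment avoiding $1/K$, because $0\in f(\D)$ and $f(\D)$ is the connected component containing $0$. I will use the Fekete--Szeg\H{o} inequality to prune $K(a_3)$ further.

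\emph{Choice of center and certificate.} For each $a_3$, take $c=c(a_3)$ to be a simple function of $a_3$, of the form $c=t(\abs{a_3})\cdot\overline{a_3}/\abs{a_3}$ up to a fixed rotation, with $t$ piecewise linear. The direction is chosen to move away from the cluster of omitted values that the constraints permit near the origin. By rotation invariance, $f(z)\mapsto e^{-i\theta}f(e^{i\theta}z)$ multiplies $a_3$ by $e^{2i\theta}$, so I may take $a_3\in[0,1/3]$ real. This reduces the parameter space to a one-dimensional interval. The remaining claim is
\[
\inf_{u\in K(a_3)}\abs{c-1/u}\ge 0.612242.
\]
To verify it, I subdivide $a_3\in[0,1/3]$ and the $u$-region (in polar coordinates, with $\abs{u}$ bounded by Koebe's $\abs{u}\le 4$) into boxes. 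On each box I either show that some quadratic form $Q_{a_3,\xi}$, with $\xi$ taken from a finite preselected list, is negative throughout, so the box is excluded, or show that $\abs{c-1/u}\ge 0.612242$ via interval arithmetic.

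The main obstacle is this final certificate near the extremal configuration, where the margin is tiny and the $\sup$ over $\xi$ must be replaced by finitely many choices. I would first locate the near-critical $(a_3,u)$ numerically by optimizing over $\xi$ and over $t$. I would then choose $\xi$ adaptively per box as the numerical minimizer at the box center, with mean-value or Taylor enclosures for $Q$ so that narrow boxes close. Separately, the center validity condition (a segment from $0$ to $c$ avoiding $1/K$) must be certified in the same way.
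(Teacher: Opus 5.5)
There is a genuine gap in the step that produces constraints on omitted values. As written it yields nothing, and repairing it forces you to keep the quartic coefficient.

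\textbf{The transform you use gives no information about $w$.} Since $1/g_w=1/f-1/w$, the function $1/g_w(1/z)$ is just a translate of $1/f(1/z)$. Grunsky coefficients are translation invariant, so every Grunsky inequality you write for it is independent of $w$; for instance, the first coefficient is $b_2^2-b_3=-a_3$. The paper brings $w$ into the matrix by passing to the odd square-root transform $h(\zeta)=F(1/\zeta^2)^{-1/2}$ of $F=f/(1-f/w)$. But then the useful $2\times2$ compression, on rows and columns $1,3$, has the entry $b_{33}=J/2-A^3/24$ with $J=-a_4$. So the constraint $\norm{M_{s,a}(w)}\le1$ of Corollary~\ref{cor:omitted} necessarily involves the quartic coefficient. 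Eliminating it leaves essentially the first-column bound $\abs{A}^2/4+3\abs{Q/2-A^2/8}^2\le1$, with $A=1/w$ and $Q=-a_3$. At $\abs{a_3}=1/3$ this is weaker than Fekete--Szeg\H{o}.

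\textbf{A center depending on $a_3$ alone cannot reach $0.612242$ with the tools you list.} The map $f\mapsto-f(-z)$ keeps $a_2=0$ and $a_3$ but negates $a_4$ and the omitted set. Hence any $a_3$-only constraint set is centrally symmetric. There is then no direction in which to ``move away from a cluster'' of omitted values, since its mirror image is equally admissible.

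Concretely, normalize $a_3=-1/3$; this is your $a_3=1/3$ after a quarter turn.
\begin{itemize}
\item On the real axis Fekete--Szeg\H{o} gives only $\abs w\ge0.568$, and this value is attained. The real extremal $F$ with $a_2(F)=A\approx1.760$ and $A^2-a_3(F)=1/3$ omits the ray $(-\infty,-0.45]$, which contains $-1/A$.
\item So $F/(1+AF)$ is an admissible $f$ (with $a_2=0$, $a_3=-1/3$) that omits $1/A\approx0.568$, and $-f(-z)$ omits $-0.568$.
\item The region Fekete--Szeg\H{o} certifies free of omitted values at this $a_3$ reaches $\abs w\approx0.568$ on the real axis and $\approx0.73$ on the imaginary axis, and bends inward. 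Its horizontal half-width therefore never exceeds about $0.568$, so it contains no disk of radius $0.612$ for any center.
\item Even an ideal $a_3$-only relaxation would have to clear $\pm0.568$ simultaneously. That pushes the center at least about $0.23$ off the real axis and requires certifying out to modulus about $0.84$ in the imaginary direction, far beyond what Fekete--Szeg\H{o} gives.
\end{itemize}

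\textbf{The missing idea is the paper's.} Keep $a=a_4$, which is bounded by $\abs a\le b(s)$ via the Grunsky matrix of $f$ itself. Use the full complex matrix $M_{s,a}(w)$, and let the center $c(s,a)=s(A_0u-iB_0v)$ depend on $\Real a$ and $\Imag a$. At $s=1/3$, $a=\pm11/18$ the largest centered disk is only about $0.568$. It is exactly the $a$-dependent shift $c=\pm0.0516$ that clears radius $0.612242$. Accordingly the certificate is four-dimensional, in $(s,\abs a,\arg a,\theta)$, not a one-parameter scan in $a_3$.

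Separately, your segment condition on $c$ is unnecessary. A disk that misses a set containing every omitted value already lies in $f(\D)$.
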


Theorem~\ref{thm:computer} raises the lower bound by more than $0.04$, compared
with a total gain of less than $0.005$ between 1929 and 2009, and removes almost
half of the gap to the upper bound $0.6562681$ of
Theorem~\ref{thm:upper-intro} below.

To explain the mechanism, Landau's normalization and a rotation allow
us to write
\[
 f(z)=z-sz^3+az^4+\cdots,\qquad 0 \le s \le 1/3,
\]
where $a$ may be complex. For each omitted value $w$, the transform $f/(1-f/w)$
is again normalized and univalent. Applying the Grunsky inequalities to its odd
square-root transform gives the matrix $M_{s,a}(w)$ in
\eqref{eq:omitted-matrix}, with $\norm{M_{s,a}(w)} \le 1$. Consequently, every
omitted value is contained in the set
\[
 K_{s,a}=\{w \ne 0: \norm{M_{s,a}(w)} \le 1\}.
\]
The set $K_{s,a}$ is an outer relaxation of the omitted set, with the
coefficients held fixed; it need not itself be the omitted set of a schlicht
function.
We seek a disk disjoint from $K_{s,a}$, by choosing its center as a function of
$s$ and $a$, see Figure~\ref{fig:kset}. This preserves information that
is lost if the coefficient terms are separately replaced by their worst-case
absolute values before the center is chosen.

There are two complementary uses of this information. In the analytic proof, a
real supporting form of the matrix inequality retains a second angular harmonic
from~$s$ and a third harmonic from~$a$. A suitable translation contributes a
first harmonic; subtracting it from the quartic-coefficient term leaves a factor
$\sin^2\theta$, which can be controlled together with the second harmonic. This
explains the cancellation behind the larger disk, see Figure~\ref{fig:analytic}.
The bounded-image transform and the classical growth estimate are still used,
but the angular terms are kept until this cancellation has taken place. In the
computer-assisted proof, complex quadratic forms retain more of the matrix
constraint, and a polynomial choice of center yields the stronger bound. In
fact, Theorem~\ref{thm:computer} requires only $a_2=0$ and $\abs{a_3} \le 1/3$,
rather than the full Bloch derivative inequality supplied by Landau's
normalization.

The larger gain therefore comes from combining joint coefficient constraints
with the freedom to move the disk center. For the second theorem this freedom is
necessary: Proposition~\ref{prop:centered} gives a function in~$\B$ whose image
omits a point at distance less than~$0.6$ from~$f(0)$, so no centered estimate
can exceed~$0.6$.

For the second theorem, an explicit polynomial center and an elementary radial
monotonicity criterion reduce the problem to scalar inequalities on four real
parameters. The finite statement requiring computation is isolated in
Lemma~\ref{lem:finite-certificate} and verified exhaustively with Arb interval
arithmetic in about $42$~seconds on the author's laptop, and an independently
written second verifier using a different approach confirms the result.
Floating-point eigenvectors propose test vectors, but every condition used to
accept a parameter box is checked rigorously. The role of computation is thus to
verify a finite family of explicit inequalities after the analytic reduction,
with no discretization of arbitrary univalent maps or numerical integration of a
L\"owner equation.

In the other direction, we give a slight improvement of the upper bound, by
keeping Goodman's first two generations of slits, and then adding five arcs in
each sector of opening $\pi/3$, designed to be harmonically symmetric, and
finally truncating by a circle.

\begin{theorem}\label{thm:upper-intro}
There is a univalent function $f$ on $\D$, mapping onto an explicit domain,
with $R_f/\abs{f'(0)}<0.656268014$. Consequently $\U<0.6562681$.
\end{theorem}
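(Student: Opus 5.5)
The plan is to build an explicit simply connected domain $\Omega$ with $0\in\Omega$ and compute two quantities for it: its conformal radius $r(\Omega)=\abs{f'(0)}$ at a chosen base point, and its inradius $R$. The theorem then follows from $R/r(\Omega)<0.656268014$. I would follow Goodman's construction. Take the hexagonal (triangular) lattice of points that are to be omitted; by the symmetry of the configuration these all lie at mutual distance about $2R$. Remove radial slits through the first two generations of lattice points, so that every disk of radius larger than $R$ meets an omitted point or slit. Then, in each sector of opening $\pi/3$, add five analytic arcs that continue the slit tree, and finally cut everything off by a circle $\abs{w}=\rho$. By the sixfold rotational and reflection symmetry, the inradius is determined by a single fundamental sector. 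So $R$ is the maximum over finitely many circumcircles of the omitted lattice points, together with a check near the truncating circle. This part is elementary geometry, and I would verify it directly: every point of $\Omega$ lies within distance $R$ of the complement.

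The hard part is the lower bound on the conformal radius $r(\Omega)$. Equivalently, I need an upper bound on the capacity-type quantity of the complement $E=\Chat\setminus\Omega$ as seen from $0$, namely the Green function constant $g_\Omega(w,0)=\log(1/\abs{w})+\log r(\Omega)+o(1)$. The arcs are chosen harmonically symmetric, that is, nearly critical: each arc is approximately a trajectory along which the harmonic measure from $0$ is distributed equally on both sides. For an exactly critical configuration (a quadratic-differential trajectory structure), adding or moving the arc changes $r(\Omega)$ only to second order. I would exploit this in the following way.

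\begin{enumerate}
\item Compute the mapping function numerically, for example by a conformal-map solver or by a series for $g_\Omega$.
\item Turn the numerical result into a rigorous bound. For this I would use a monotonicity and comparison argument: replace each arc by a nearby exactly described curve, bound $\log r$ using subordination (Lindel\"of) under enlargement of the complement, and estimate the effect of the deviation from criticality by a quantitative capacity estimate for almost critical arcs. The error is quadratic in the deviation of the arc from the trajectory, weighted by the density of harmonic measure.
\item Certify the resulting finite set of inequalities with interval arithmetic.
\end{enumerate}

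The main obstacle is step 2: obtaining a rigorous lower bound for $r(\Omega)$ accurate to about $10^{-7}$. Monotonicity alone gives bounds in the wrong direction when the complement is shrunk. I would first try a Dirichlet-principle (variational) bound: construct an explicit harmonic test function $u$ on $\Omega$ with logarithmic pole at $0$ and $u\le 0$ on $\partial\Omega$, built from the numerical conformal map. The maximum principle then gives $u\le g_\Omega$, hence a certified lower bound on $\log r(\Omega)$. Finally, I would compare this with the certified inradius to obtain $0.656268014$, and hence $\U<0.6562681$.
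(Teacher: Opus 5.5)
Your overall architecture matches the paper's: a Goodman-type domain (two slit generations, five arcs per sector of opening $\pi/3$, a truncating circle), a direct check of the inradius, and a lower bound for the conformal radius from an explicit harmonic minorant and the maximum principle, certified in interval arithmetic. Lemma~\ref{up:lem:cap} is exactly such a minorant argument. The gap is that you never construct the minorant, and that construction is the entire content of the lower bound. ``A test function built from the numerical conformal map'' cannot be certified as it stands. It must be exactly harmonic in $\Omega$, with different boundary behaviour on the two sides of six slits, thirty arcs and the circle (square-root singularities at every tip, and junctions where arcs land on slits). Its constant at the pole must be known exactly. And $u\le0$ must be verified on both sides of every boundary arc to within roughly $10^{-9}$ for the stated constant; the paper's whole slack is $2\sum_j\varepsilon_j<1.1\cdot10^{-9}$. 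A numerically computed map supplies none of this, and a map that is exactly conformal onto a nearby domain is in general not defined on all of $\Omega$.

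Two concrete ideas are missing. First, the quotient $w\mapsto\Phi_R(w^3)$ with $\Phi_R(t)=(R^6+1-t-R^6/t)/\Delta_R$. It identifies $t$ with $R^6/t$ and folds both slit generations and the circle onto $[0,1]$. The domain becomes $\Chat\setminus T$ with $T=[0,1]\cup\bigcup_j(\gamma_j\cup\bar\gamma_j)$, and $r(D_T,0)^3=R^6/(\Delta_R\capa T)$ holds exactly (Lemma~\ref{up:lem:quotient}). Second, the explicit minorant $h=\Real\int_1^z P/\sqrt A$ with $A=z(z-1)\prod_j(z-a_j)(z-\bar a_j)$. It has exactly the branch structure of $T$, and it is locally constant on $[0,1]$ because $q$ is purely imaginary there. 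On $\gamma_j$ it varies by at most $\varepsilon_j=\int_{\gamma_j}\abs{\Real(q\,dz)}$, and its Robin constant is $\log4+J$, with $J$ a one-dimensional integral of polynomials. The maximum principle then gives $\log\capa T\le-\log4-J+2\sum_j\varepsilon_j$. The arcs are defined as explicit polynomial curves hugging the trajectories of $q^2dz^2$, which makes $\varepsilon_j$ tiny. The error is linear in $\varepsilon_j$, so the second-order variational estimate you assert (but do not prove) is not needed.

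Separately, your reduction of the inradius to ``finitely many circumcircles of omitted lattice points'' is wrong. Goodman's omitted points $\omega^k$ and $2e^{i\pi/3}\omega^k$ do not form a lattice. Moreover, several extremal disks touch the arcs at interior points. This is why the paper covers the centre region by dyadic boxes, samples each arc at $16384$ points, and allows the slack $\rho_0=1+10^{-8}$.
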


This improves the bound $0.6563937$ of Carroll and
Ortega-Cerd\`a~\cite{CarrollOrtegaCerda}. It is also below the value
$0.656319277$ that Ortega-Cerd\`a and
Pridhnani~\cite[Section 5.3]{OrtegaCerdaPridhnani} computed numerically,
without a rigorous error analysis. The proof combines a capacity estimate
for almost critical arcs with an interval-arithmetic certificate for the
inradius, see Theorem~\ref{thm:upper}. After this work was completed, van
Staden~\cite{vanStaden} posted a preprint claiming $\U<0.65628589$ by a
different computer-assisted method, using a disk with twenty-four slits and a
comparison function controlled in ball arithmetic. The bound of
Theorem~\ref{thm:upper-intro} is smaller, as is the simpler three-arc bound
$0.6562745$ of Section~\ref{sec:upper}.

Sections~\ref{sec:normalization} and~\ref{sec:grunsky} give the common
analytic preliminaries. Section~\ref{sec:hand} proves
Theorem~\ref{thm:hand}, Sections~\ref{sec:matrix} and~\ref{sec:certificate}
prove Theorem~\ref{thm:computer}, and Section~\ref{sec:upper} proves
Theorem~\ref{thm:upper-intro}.

The results rest on different kinds of verification, summarized below. All
verifiers, data and the Lean development are in the archive~\cite{WikArchive}.
\begin{center}
\small
\begin{tabular}{lp{0.62\textwidth}}
\toprule
Result & Verification\\
\midrule
Theorem~\ref{thm:hand} & Proof in Section~\ref{sec:hand}; exact rational
checks of the scalar estimates (Appendix~\ref{app:scalar}); Lean~4
formalization from the Grunsky and Fekete--Szeg\H{o} inequalities\\
Theorem~\ref{thm:computer} & Exhaustive interval certificate
(Lemma~\ref{lem:finite-certificate}) in Arb, confirmed by an independently
written verifier\\
Proposition~\ref{prop:centered} & Exact Bernstein-coefficient and rational
Riemann-sum checks (Appendix~\ref{app:centered})\\
Theorem~\ref{thm:upper-intro} & Exact curve data, Taylor-model capacity
estimates and a geometric covering certificate, in Arb\\
\bottomrule
\end{tabular}
\end{center}

\subsection*{Tool disclosure}
OpenAI (GPT-6 Astra) as well as Anthropic Claude (Opus~5.5 and Fable~5.1), were
used as interactive research assistants for mathematical brainstorming and proof
auditing.

The models assisted with numerical exploration as well as with developing the
interval-arithmetic verification scripts. The Lean certificate for the analytic
bound was generated using Opus~5.5. The author has reviewed all generated
material, independently checked the arguments and computations, and takes full
responsibility for the content of the paper.

\section{Landau normalization}\label{sec:normalization}

For convenience of the reader, let us recall the Landau normalization. Put
\[
 \B=\{f\in\Sclass:(1-\abs{z}^2)\abs{f'(z)} \le 1\text{ for }z\in\D\}.
\]
\begin{lemma}\label{lem:normalization}
We have
\[
 \U=\inf_{f\in\B}R_f.
\]
Every $f\in\B$ satisfies $a_2=0$, $\abs{a_3} \le 1/3$, and
$\abs{f(z)} \le \artanh\abs{z}$.
\end{lemma}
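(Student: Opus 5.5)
We prove the two inequalities $\U\le\inf_{\B}R_f$ and $\U\ge\inf_{\B}R_f$ separately, and then establish the three properties of $\B$ from the Bloch condition alone.

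\emph{Reduction to $\B$.} Since $\B\subset\Sclass$, we get $\U\le\inf_{f\in\B}R_f$ at once. For the reverse inequality, take an arbitrary univalent $f$. Because $R_f/\abs{f'(0)}$ is invariant under affine maps $w\mapsto \alpha w+\beta$, we may assume $f\in\Sclass$. Suppose $R_f<\infty$ (otherwise there is nothing to prove). By Koebe's one-quarter theorem applied at each point, $(1-\abs{z}^2)\abs{f'(z)}\le 4\dist(f(z),\partial f(\D))\le 4R_f$, so the quantity
\[
 (1-\abs{z}^2)\abs{f'(z)}
\]
is bounded. The difficulty is that it need not attain its supremum, so we shrink the disk. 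For $r<1$ put $f_r(z)=f(rz)$. Then $(1-\abs{z}^2)\abs{f_r'(z)}=(1-\abs z^2)r\abs{f'(rz)}$, which tends to $0$ as $\abs z\to1$ and is continuous on $\D$. Hence it attains its maximum at some $z_0\in\D$. Compose with the disk automorphism $\varphi(\zeta)=(\zeta+z_0)/(1+\bar z_0\zeta)$ and set
\[
 g=\frac{f_r\circ\varphi-f_r(z_0)}{(f_r\circ\varphi)'(0)}.
\]
The Bloch quantity is Möbius invariant, $(1-\abs\zeta^2)\abs{(h\circ\varphi)'(\zeta)}=(1-\abs{\varphi(\zeta)}^2)\abs{h'(\varphi(\zeta))}$, and its maximum for $f_r\circ\varphi$ is attained at $\zeta=0$. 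Dividing by $(f_r\circ\varphi)'(0)$ therefore gives $g\in\Sclass$ with $(1-\abs\zeta^2)\abs{g'(\zeta)}\le1$, that is, $g\in\B$. Moreover
\[
 R_g=\frac{R_{f_r}}{(1-\abs{z_0}^2)r\abs{f'(rz_0)}}\le \frac{R_f}{r\abs{f'(0)}},
\]
using $f_r(\D)\subset f(\D)$ and the maximality at $z_0$, which gives $(1-\abs{z_0}^2)r\abs{f'(rz_0)}\ge r\abs{f'(0)}$. Letting $r\to1$ yields $\inf_\B R\le R_f/\abs{f'(0)}$.

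\emph{Properties of $\B$.} The function $h(z)=(1-\abs z^2)\abs{f'(z)}$ attains its maximum value $1$ at $z=0$. We use the expansion $f'(z)=1+2a_2z+3a_3z^2+\cdots$, so that
\[
 \log\abs{f'(z)}=\Real\bigl(2a_2z+(3a_3-2a_2^2)z^2\bigr)+O(\abs z^3).
\]
Hence $\log h=\Real(2a_2z)+O(\abs z^2)$, and a maximum at $0$ forces $a_2=0$. At second order, with $a_2=0$, we have $\log h=\Real(3a_3z^2)-\abs z^2+O(\abs z^3)$. Choosing $z=t e^{i\theta}$ with $3a_3e^{2i\theta}=3\abs{a_3}$, nonpositivity for small $t$ gives $3\abs{a_3}\le1$. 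Finally, the Bloch condition gives $\abs{f'(z)}\le 1/(1-\abs z^2)$, and integrating along the radius gives
\[
 \abs{f(z)}\le\int_0^{\abs z}\frac{dt}{1-t^2}=\artanh\abs z.
\]

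The main obstacle is the attainment of the maximum of the Bloch quantity. The dilation $f_r$ together with the limit $r\to1$ handles it, and the remaining bookkeeping uses only the fact that $R$ is monotone under inclusion of images.
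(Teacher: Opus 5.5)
Your proof is correct and follows the paper's argument essentially step for step: you dilate to $f(rz)$ so that the Bloch quantity attains its maximum, recenter with a disk automorphism and renormalize to land in $\B$, then compare inradii and let $r\to1$; after that you read off $a_2=0$ and $3\abs{a_3}\le1$ from the expansion at the origin, and get the growth bound by radial integration. The one extra ingredient, the Koebe bound on $(1-\abs{z}^2)\abs{f'(z)}$, is harmless but unnecessary, since the dilation alone guarantees that the maximum is attained.
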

\begin{proof}
For completeness, fix a univalent $f$ and $0<r<1$, and put $g(z)=f(rz)$. The
function $(1-\abs{z}^2)\abs{g'(z)}$ attains its positive maximum $M$ at a point
$a\in\D$. Let $\phi_a(z)=(a+z)/(1+\bar a z)$. The normalized map
\[
 G(z)=\frac{g(\phi_a(z))-g(a)}{(1-\abs{a}^2)g'(a)}
\]
belongs to $\B$, by the identity
$(1-\abs{z}^2)\abs{\phi_a'(z)}=1-\abs{\phi_a(z)}^2$. Also $M \ge
\abs{g'(0)}=r\abs{f'(0)}$ and $R_g=MR_G \le R_f$. Letting $r\nearrow 1$ proves
the nontrivial direction of the asserted infimum identity; the other follows
from $\B\subset\Sclass$. This is Landau's normalization; see also \cite[Section
1]{Skinner}.

Expanding the Bloch inequality at zero in every direction first gives
$a_2=0$ and then $3\abs{a_3} \le 1$. Integrating
$\abs{f'(te^{i\theta})} \le (1-t^2)^{-1}$ along a radius gives the last claim.
\end{proof}

After a rotation $f(z)\mapsto e^{-i\theta}f(e^{i\theta}z)$, we may
therefore write
\begin{equation}\label{eq:f}
 f(z)=z-sz^3+az^4+\cdots,\qquad\text{where $0 \le s \le 1/3$.}
\end{equation}
The quartic coefficient $a$ is in general complex. The analytic proof
uses $f\in\B$, including its growth bound. The matrix proof uses only
$f\in\Sclass$ and the bounds on $a_2$ and $a_3$.

Both proofs end by showing that a suitable open disk contains no omitted
value, that is, no point of $\C\setminus f(\D)$. Such a disk lies in
$f(\D)$, so $R_f$ is at least its radius.

\subsection{A limit for disks centered at the origin}
Every lower bound for $d_f=\dist(0,\C\setminus f(\D))$ that holds for all
$f\in\B$ is also a lower bound for $\U$; Skinner's bound $0.57088586$ is of
this kind. The following example shows that such centered bounds cannot
exceed $0.6$. Hence Theorem~\ref{thm:computer}, and any bound above $0.6$,
requires a disk that is not centered at $f(0)$.

\begin{proposition}\label{prop:centered}
Let $a=2/5$, $\eta=999/1000$, and
\[
 H(z)=\frac{(1-z^2)(1+az+z^2)}{(1+z^2)(1+az+az^3+z^4)},\qquad
 f_0(z)=\int_0^zH(\eta\zeta)\,d\zeta .
\]
Then $f_0\in\B$, $f_0$ is univalent in a neighborhood of $\overline\D$, and
$0<f_0(1)<0.599847$. In particular $d_{f_0} \le f_0(1)<0.6$.
\end{proposition}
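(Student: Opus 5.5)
We must show four things: $f_0$ is univalent near $\overline\D$, $f_0\in\B$, $f_0(1)$ is real and positive, and $f_0(1)<0.599847$. The final claim $d_{f_0}\le f_0(1)$ follows at once. Indeed, $f_0$ is univalent on a disk of radius slightly larger than $1$, so $f_0(1)$ lies on the boundary curve $f_0(\partial\D)$. Hence $f_0(1)$ is an omitted value, since $f_0(\D)$ is the bounded Jordan domain inside that curve.

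\emph{Normalization and reality.} We have $H(0)=1$, so $f_0'(0)=1$. The function $H$ is even up to the linear factors, and its coefficients are real, so $f_0$ has real Taylor coefficients. Thus $f_0(1)=\int_0^1 H(\eta t)\,dt$ is real. For $0\le t\le1$ the numerator factors $1-x^2$ and $1+ax+x^2$ are nonnegative (with $x=\eta t<1$), and the denominator is positive. So the integrand is positive and $f_0(1)>0$.

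\emph{Univalence.} The map $f_0$ is holomorphic on $\abs{z}<1/\eta$ once we check that the denominator has no zeros there. The factor $1+z^2$ vanishes only at $\pm i$, which lie on the unit circle, and $\pm i/\eta$ lie outside. The factor $1+az+az^3+z^4$ is self-reciprocal, with roots on or symmetric about the unit circle; writing it as $z^2\bigl(w^2-2+aw\bigr)$ with $w=z+1/z$, the roots $w=(-a\pm\sqrt{a^2+8})/2$ are real with $\abs{w}<2$. So all its roots lie on $\abs z=1$, and after scaling by $\eta$ they lie outside $\overline\D$.

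To prove univalence on $\overline\D$ I would use a close-to-convex criterion. I would show $\Real\bigl(e^{i\alpha}H(\eta z)\bigr)>0$ on $\overline\D$, or more robustly, that $H(\eta z)$ maps $\overline{\D}$ into a sector of opening less than $\pi$. By the minimum principle it suffices to check the boundary $z=e^{i\phi}$. The structure of $H$ (products of self-reciprocal factors) makes $H$ on the unit circle essentially real, since each factor $p(z)/z^{k}$ is real on $\abs z=1$. So $\arg H(\eta e^{i\phi})$ stays near $0$ or $\pi$, which must be handled with care. Because $\eta<1$ perturbs this, the cleanest route is Noshiro--Warschawski on a slightly larger disk, verified by a certified Bernstein-coefficient positivity check of a real trigonometric polynomial in $\cos\phi$, as the appendix reference suggests.

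\emph{Bloch condition.} We need $(1-\abs z^2)\abs{H(\eta z)}\le1$ on $\D$. Set $r=\abs z$. Since $\abs{H}$ is subharmonic, I would bound $\max_{\abs z=r}\abs{H(\eta z)}$ by $1/(1-r^2)$. This is equivalent to a polynomial inequality
\[
(1-r^2)^2\abs{N(\eta re^{i\phi})}^2\le \abs{D(\eta re^{i\phi})}^2,
\]
which is a two-variable polynomial inequality in $(r,\cos\phi)$ on $[0,1]^2$. Near $r=0$ it is tight to second order, because $H(z)=1+O(z^2)$ in a way compatible with $a_2=0$. So I would divide out the forced factor $r^2$ and then certify nonnegativity of the quotient polynomial by subdividing $[0,1]^2$ and checking Bernstein coefficients in exact rationals.

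\emph{Numerical bound.} For $f_0(1)=\int_0^1H(\eta t)\,dt<0.599847$, I would use a rational Riemann or upper Darboux sum. The integrand is monotone or piecewise monotone on $[0,1]$, which can be checked via the sign of $H'$, so right or left endpoint sums give rigorous upper bounds. A few thousand nodes suffice for the margin.

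The main obstacle is the Bloch inequality: it is nearly tight (the example is designed to be extremal-like), so the polynomial certificate needs careful subdivision near the points of near-equality, especially at $r\to1$ near the boundary zeros and poles of $H$.
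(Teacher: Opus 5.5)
The Bloch inequality and the bound on $f_0(1)$ follow the paper's route. You clear denominators in $|D(\eta z)|^2-(1-|z|^2)^2|N(\eta z)|^2\ge0$, divide by $r^2$ and certify Bernstein coefficients on a subdivided square. For $f_0(1)$ you use the sign of $H'$ and a one-sided Riemann sum.

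\textbf{The gap is univalence.} The criterion you propose, $\Real(e^{i\alpha}H(\eta z))>0$ on a disk of radius at least $1$, is false for this $H$, for every $\alpha$. So the positivity check you describe cannot succeed.

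The error comes from your structural claim. On $|z|=1$ the factors contribute $z^2$ to the numerator and $z^3$ to the denominator, and $1-z^2=-2iz\sin\phi$ supplies a factor $-i$. So it is $izH(z)$, not $H(z)$, that is real there; the limiting boundary consists of horizontal segments. Hence $\arg H(e^{i\phi})$ turns with $\phi$. For instance,
\[
H(e^{i\pi/4})=-\frac{\sqrt2+a}{2a}(1+i),\qquad H(\eta e^{i\pi/4})\approx-2.25-2.29i .
\]
Since $H$ has real coefficients, $\Real(e^{i\alpha}H(\eta z))>0$ on $\D$ would also give $\Real(e^{-i\alpha}H(\eta z))>0$, hence $\cos\alpha\,\Real H(\eta z)>0$. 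At $z=0$ this forces $\cos\alpha>0$, so $\Real H(\eta z)>0$ on all of $\D$, which fails near $e^{i\pi/4}$. Thus no sector of opening less than $\pi$ contains $H(\eta\D)$. Noshiro--Warschawski with respect to the identity is unavailable on $\D$ or on any larger disk.

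\textbf{The missing idea} is to test close-to-convexity against a different convex function. Put $w(z)=-z^3(a+z)/(1+az)$. Then $|w(z)|\le|z|^3$, and since $(1-z^2)(1+az+z^2)=1+az-az^3-z^4$,
\[
H(z)=\frac{1+w(z)}{\bigl(1-w(z)\bigr)(1+z^2)} .
\]
Take $g(z)=\eta^{-1}\arctan(\eta z)$. It is convex in $|z|<1/\eta$, because $\Real(1+zg''/g')=\Real\bigl((1-\eta^2z^2)/(1+\eta^2z^2)\bigr)>0$ there. Then
\[
f_0'/g'=\frac{1+w(\eta z)}{1-w(\eta z)}
\]
has positive real part. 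So $f_0\circ g^{-1}$ has derivative with positive real part on a convex domain and is injective. This gives univalence on $|z|<1/\eta$ with no computation. It also gives $f_0(1)\notin f_0(\D)$ directly from injectivity, so your Jordan-curve argument is unnecessary.

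A minor quantitative point: $H$ drops from $1$ to about $10^{-3}$ on $[0,\eta]$, so an endpoint sum with $n$ nodes overestimates by roughly $1/(2n)$. Reaching the specific constant $0.599847$ will likely need $n$ of order $10^4$, as in the paper's $16384$, rather than a few thousand.
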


The proof is given in Appendix~\ref{app:centered}. The example only motivates
the translated disks; it is not used in the proofs of the lower bounds.

The domain $f_0(\D)$ is bounded, and its boundary has four long, thin
incursions resembling horizontal slits: one from the right, ending at
$f_0(1)$, and three from the left; see Figure~\ref{fig:centered}. The
center rule~\eqref{mat:eq:center} of Section~\ref{sec:matrix}, applied to the
coefficients of~$f_0$, moves the disk away from the omitted
point~$f_0(1)$. Together with Skinner's bound, Proposition~\ref{prop:centered}
shows that the largest radius obtainable from centered disks,
$\inf_{f\in\B}d_f$, lies in $[0.5708,0.6)$. Whether centered disks can
reach $0.592$ remains open.

\begin{figure}[ht]
\centering
\includegraphics[width=\textwidth]{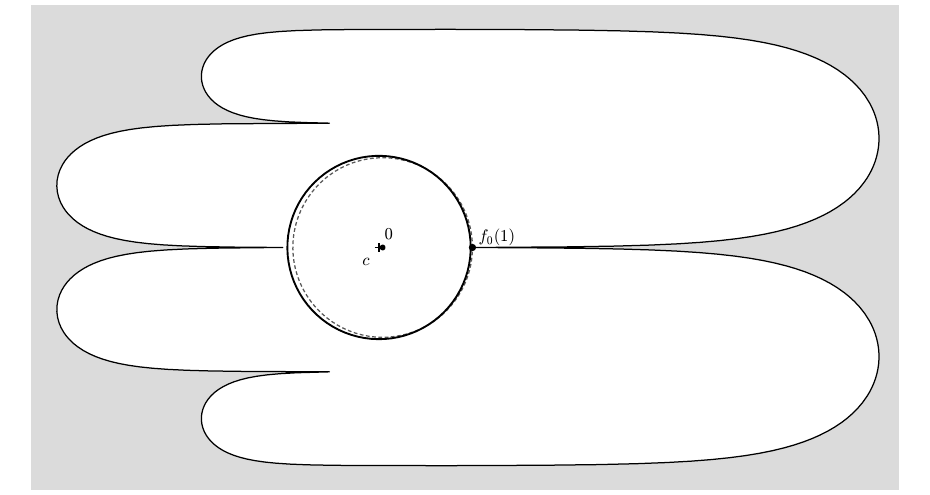}
\caption{The domain $f_0(\D)$ of Proposition~\ref{prop:centered} (white).
The disk centered at $f_0(0)=0$ (dashed) is stopped by the omitted point
$f_0(1)=0.5998$. The disk $D(c,R)$ of Theorem~\ref{mat:thm:disk}, with
$R=0.612242$ and the center $c=-0.0242$ of \eqref{mat:eq:center} computed
from the coefficients of $f_0$, lies in $f_0(\D)$ (solid); the distance from
its center to the boundary is $0.624$. The values $0.5998$, $0.0242$ and
$0.624$ are rounded floating-point values.}
\label{fig:centered}
\end{figure}
\section{A Grunsky matrix and its supporting forms}\label{sec:grunsky}

We use the classical Grunsky inequalities; see \cite{Pommerenke} and
\cite[Section 1]{OT} for the odd square-root transform. Let
\[
 F(z)=z+Az^2+Bz^3+Cz^4+\cdots\in\Sclass,
\]
and put $Q=A^2-B$ and $J=-A^3+2AB-C$.
Here $A,Q,J$ are initially complex. The odd exterior schlicht function
\[
 h(\zeta)=\frac1{\sqrt{F(1/\zeta^2)}}
 =\zeta+\frac{c_1}{\zeta}+\frac{c_3}{\zeta^3}
                 +\frac{c_5}{\zeta^5}+\cdots
\]
has coefficients
\[
 c_1=-\frac A2,\qquad c_3=\frac Q2-\frac{A^2}{8},\qquad
 c_5=\frac J2+\frac{AQ}{4}-\frac{A^3}{16}.
\]
Using the convention
\[
 \log\frac{h(\zeta)-h(\eta)}{\zeta-\eta}
       =-\sum_{m,n \ge 1}b_{mn}\zeta^{-m}\eta^{-n},
\]
we obtain $b_{11}=c_1$, $b_{13}=c_3$, and
\[
 b_{33}=c_5+c_1c_3+\frac{c_1^3}{3}
       =\frac J2-\frac{A^3}{24}.
\]
The Grunsky inequalities imply that the matrix
\begin{equation}\label{eq:M}
 M(A,Q,J)=
 \begin{pmatrix}
 -A/2&\sqrt3(Q/2-A^2/8)\\
 \sqrt3(Q/2-A^2/8)&3J/2-A^3/8
 \end{pmatrix}
\end{equation}
has operator norm at most one. This follows either by retaining rows and
columns $1,3$ in the strong Grunsky inequality, or from the corresponding
complex symmetric bilinear form.

\begin{lemma}\label{lem:support}
If $A$ is real, then for every real $\lambda$,
\begin{equation}\label{eq:support}
 \Real(J+\lambda Q) \ge  P_\lambda(A) \coloneq \frac{A^3}{12}+\frac{\lambda A^2}{4}
       +\frac{\lambda^2 A}{4}-\frac{\lambda^2}{2}-\frac23.
\end{equation}
Also, for~\eqref{eq:f}, with the parameter $a$ possibly complex,
\begin{equation}\label{eq:abound}
 \abs{a} \le b(s):=\frac23-\frac{s^2}{2}.
\end{equation}
\end{lemma}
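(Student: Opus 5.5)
The plan is to get both assertions from the single fact, stated just before the lemma, that the matrix $M(A,Q,J)$ in \eqref{eq:M} has operator norm at most one. The form I will use is the complex symmetric bilinear one: for every $x\in\C^2$,
\[
 \abs{x^{T}M x}\le \norm{M}\,\abs{\bar x}\,\abs{x}\le \abs{x}^2 .
\]
Each inequality then comes from one well-chosen test vector $x$, followed by taking real parts. No other input is needed.

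\emph{First inequality.} Assume $A$ is real and take the real vector $x=(\sqrt3\lambda/2,\,1)$, so that $\abs{x}^2=1+3\lambda^2/4$. Write $x=(u,v)$. Expanding,
\[
 x^TMx=-\frac{A u^2}{2}+2\sqrt3\,uv\Bigl(\frac Q2-\frac{A^2}{8}\Bigr)+v^2\Bigl(\frac{3J}{2}-\frac{A^3}{8}\Bigr).
\]
The vector is chosen so that $J$ and $Q$ appear in the ratio $1:\lambda$. With $u=\sqrt3\lambda/2$ and $v=1$ this becomes
\[
 x^TMx=\frac32(J+\lambda Q)-\frac{3\lambda^2A}{8}-\frac{3\lambda A^2}{8}-\frac{A^3}{8}.
\]
Since $A$ and $\lambda$ are real, the last three terms are real. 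From $\Real(x^TMx)\ge-\abs{x}^2=-(1+3\lambda^2/4)$ and multiplication by $2/3$ we get
\[
 \Real(J+\lambda Q)\ge \frac{A^3}{12}+\frac{\lambda A^2}{4}+\frac{\lambda^2A}{4}-\frac{\lambda^2}{2}-\frac23 ,
\]
which is \eqref{eq:support}.

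\emph{Second inequality.} Apply the bound with $F=f$ from \eqref{eq:f}. Then $A=0$, $B=-s$, $C=a$, hence $Q=s$, $J=-a$, and
\[
 M=\begin{pmatrix}0&\sqrt3 s/2\\ \sqrt3 s/2&-3a/2\end{pmatrix}.
\]
If $a=0$ there is nothing to prove, so assume $a\neq0$. Choose $\varphi$ with $-a e^{2i\varphi}=\abs{a}$, and take the complex vector $x=(t e^{-i\varphi}\cdot e^{2i\varphi}\cdot e^{-i\varphi},\,e^{i\varphi})$, that is, $x=(t,\,e^{i\varphi})$ after rotating the first coordinate so that the off-diagonal contribution is real. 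Concretely, with $x=(u,w)$ one has
\[
 x^TMx=\sqrt3\,s\,uw-\tfrac32 a w^2 ,
\]
and taking $w=e^{i\varphi}$, $u=t e^{-i\varphi}$ with $t\ge0$ real gives
\[
 x^TMx=\sqrt3\,s\,t+\tfrac32\abs{a},\qquad \abs{x}^2=1+t^2 .
\]
The bound $\abs{x^TMx}\le\abs{x}^2$ therefore yields $\tfrac32\abs{a}\le 1+t^2-\sqrt3\,s\,t$ for every real $t\ge0$. Minimizing at $t=\sqrt3 s/2$ gives
\[
 \tfrac32\abs{a}\le 1-\tfrac34 s^2,\qquad\text{that is,}\qquad \abs{a}\le\tfrac23-\tfrac{s^2}{2}=b(s),
\]
which is \eqref{eq:abound}.

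\emph{Expected obstacle.} The only delicate point is justifying $\abs{x^TMx}\le\abs{x}^2$ for complex, non-conjugated $x$. This is the complex symmetric Grunsky bilinear form, or equivalently Cauchy--Schwarz applied to $\langle Mx,\bar x\rangle$, and it is already cited as available just before the lemma. Everything else is the choice of test vectors described above.
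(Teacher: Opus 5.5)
Your proof is correct and is essentially the paper's. The first part uses the same test vector $(\sqrt3\lambda/2,1)^T$, and your choice $(te^{-i\varphi},e^{i\varphi})$ with $t=\sqrt3 s/2$ is the paper's unitary diagonal congruence followed by the necessity direction of its criterion $d\le 1-b^2$ for $\left(\begin{smallmatrix}0&b\\b&d\end{smallmatrix}\right)$. Delete the intermediate description $x=(t,e^{i\varphi})$, since that vector gives $x^TMx=\sqrt3\,s\,t\,e^{i\varphi}+\tfrac32\abs{a}$ rather than a real quantity; your ``concretely'' choice $u=te^{-i\varphi}$ is the right one.
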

\begin{proof}
Apply $\Real(v^TMv) \ge -\norm{v}^2$ with
$v=(\sqrt3\lambda/2,1)^T$ and multiply by $2/3$.
For the second assertion apply \eqref{eq:M} to $f$ itself. Its matrix is
\[
 \begin{pmatrix}0&\sqrt3s/2\\\sqrt3s/2&-3a/2\end{pmatrix}.
\]
A unitary diagonal congruence makes the off-diagonal entry and the bottom-right
entry non-negative real numbers without changing the norm. For a matrix $
\left(\begin{smallmatrix}0&b\\b&d\end{smallmatrix}\right)$ with $b,d \ge 0$,
having norm at most one is equivalent to $d \le 1-b^2$. Substitute
$b=\sqrt3s/2$, $d=3\abs{a}/2$.
\end{proof}

The same matrix constrains each omitted value.
\begin{corollary}\label{cor:omitted}
For a function of the form \eqref{eq:f}, every omitted value $w$ belongs to
\[
 K_{s,a}=\{w\ne0:\norm{M_{s,a}(w)} \le 1\},
\]
where
\begin{equation}\label{eq:omitted-matrix}
 M_{s,a}(w)=\begin{pmatrix}
 -1/(2w)&\sqrt3\bigl(s/2-1/(8w^2)\bigr)\\
 \sqrt3\bigl(s/2-1/(8w^2)\bigr)&-3a/2-1/(8w^3)
 \end{pmatrix}.
\end{equation}
\end{corollary}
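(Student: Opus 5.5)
The plan is to apply the matrix inequality \eqref{eq:M} to the omitted-value transform of $f$. Let $w\notin f(\D)$. Then $w\neq0$, since $f(0)=0$, and
\[
 F(z)=\frac{f(z)}{1-f(z)/w}=\frac{wf(z)}{w-f(z)}
\]
is analytic in $\D$. It is univalent because it is the composition of $f$ with the M\"obius map $\zeta\mapsto w\zeta/(w-\zeta)$, and this map is injective and finite on $\C\setminus\{w\}\supset f(\D)$. Moreover $F(0)=0$ and $F'(0)=1$, so $F\in\Sclass$ and Lemma~\ref{lem:support}'s setting applies. Hence $\norm{M(A,Q,J)}\le1$, where $A,B,C$ are the second, third and fourth coefficients of $F$.

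Next I compute these coefficients from \eqref{eq:f}. Write $u=1/w$. Then
\[
 F=f+uf^2+u^2f^3+\cdots,\qquad f=z-sz^3+az^4+O(z^5).
\]
Collecting terms up to $z^4$ uses $f^2=z^2-2sz^4+\cdots$, $f^3=z^3+\cdots$ and $f^4=z^4+\cdots$. This gives
\[
 A=u,\qquad B=-s+u^2,\qquad C=a-2su+u^3.
\]
The Grunsky combinations then follow:
\[
 Q=A^2-B=s,
\]
\[
 J=-A^3+2AB-C=-u^3+2u(u^2-s)-a+2su-u^3=-a.
\]
So $Q$ and $J$ do not depend on $w$ at all, and only $A=1/w$ varies with the omitted value.

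Substituting into \eqref{eq:M} gives the entries. The top-left entry is $-A/2=-1/(2w)$. The off-diagonal entry is $\sqrt3(Q/2-A^2/8)=\sqrt3\bigl(s/2-1/(8w^2)\bigr)$. The bottom-right entry is $3J/2-A^3/8=-3a/2-1/(8w^3)$. This is exactly $M_{s,a}(w)$ in \eqref{eq:omitted-matrix}. Therefore $\norm{M_{s,a}(w)}\le1$, so $w\in K_{s,a}$.

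The only step needing care is the univalence and normalization of $F$, which rests on $w$ being omitted. The coefficient bookkeeping is routine, and the cancellation in $Q$ and $J$ serves as a check on the computation.
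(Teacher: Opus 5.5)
Your proof is correct and follows the paper's argument: the transform $F=f/(1-f/w)$ lies in $\Sclass$, its functionals are $A=1/w$, $Q=s$, $J=-a$, and substituting these into \eqref{eq:M} gives $M_{s,a}(w)$. Your coefficient expansion fills in the computation the paper leaves to the reader. One small correction: the bound $\norm{M(A,Q,J)}\le1$ is the Grunsky consequence \eqref{eq:M} stated just before Lemma~\ref{lem:support}, not the lemma itself.
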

\begin{proof}
Since $f(0)=0$, an omitted value is nonzero. The map
$F=f/(1-f/w)$ is normalized and univalent, and its coefficient functionals
are $A=1/w$, $Q=s$, and $J=-a$. Substitute in~\eqref{eq:M}.
\end{proof}
\section{The analytic bound}\label{sec:hand}

We will also use the classical Fekete--Szeg\H{o} inequality~\cite{FeketeSzego}.
In the notation $F(z)=z+Az^2+Bz^3+\cdots$ of Section~\ref{sec:grunsky}, it
states
\[
 \abs{B-\lambda A^2} \le 1+2\exp \left(-\frac{2\lambda}{1-\lambda}\right),
 \qquad \text{for $0 \le \lambda < 1$}.
\]
For real $A$ and $Q=A^2-B$, taking real parts gives
\begin{equation}\label{eq:FS}
 \Real Q \ge (1-\lambda)A^2-1
             -2\exp \left(-\frac{2\lambda}{1-\lambda}\right),
 \qquad 0 \le \lambda<1.
\end{equation}

\begin{proof}[Proof of Theorem~\ref{thm:hand}]
Set $R=592/1000$ and fix
\[
 t=997/1000, \qquad
 q= \left(\frac{t}{\artanh t}\right)^2, \qquad\text{and }
 A(\rho)=\frac t\rho+\frac{q\rho}{t}.
\]
Let $f\in\B$, write it as in \eqref{eq:f}, and let $w=\rho e^{i\theta}$ be any
omitted value with $\rho<t/\sqrt q$. The bounded-image transform gives
a function $F\in\Sclass$ for which
\begin{equation}\label{eq:boundedQJ}
 A=A(\rho),\qquad Q=q+t^2s e^{2i\theta},\qquad
 J=-t^3a e^{3i\theta}.
\end{equation}
For clarity, the construction is
\[
 f_\theta(z)=e^{-i\theta}f(e^{i\theta}z),\qquad
 g(z)=\frac{f_\theta(tz)}{t},\qquad
 F=\frac{g}{1-A(\rho)\,g+q\,g^2}.
\]
The Bloch inequality gives $\abs{f_\theta(z)} \le \artanh\abs{z}$, so
$\abs{g}<\artanh(t)/t=1/\sqrt q$ on $\D$. The denominator equals
$q(g-\rho/t)(g-t/(q\rho))$. Here $g$ omits $\rho/t$ because $f$ omits
$w$, and $t/(q\rho)>1/\sqrt q$ because $\rho<t/\sqrt q$. So $F$ is
holomorphic in $\D$, with $F(0)=0$ and $F'(0)=1$. It is univalent, since
$F(z_1)=F(z_2)$ gives $(g(z_1)-g(z_2))(1-q\,g(z_1)g(z_2))=0$, while
$\abs{q\,g(z_1)g(z_2)}<1$. Expansion gives \eqref{eq:boundedQJ}.

\smallskip\noindent\emph{Small cubic.}
If $s \le 196/1000$, then $D(0,R)\subset f(\D)$ already.
Indeed, the scalar comparisons in Appendix~\ref{app:scalar} give
\begin{equation}\label{eq:smallchecks}
 \tfrac{13}{4}e^{-5/8}<A(R),\qquad
 \tfrac92e^{-5/4}-1>q+t^2\frac{196}{1000}.
\end{equation}
If an omitted value had $\rho<R$, then
$A(\rho)>A(R)>\frac{13}{4}e^{-5/8}$. Now \eqref{eq:FS} with
$\lambda=5/13$, for which $2\lambda/(1-\lambda)=5/4$, gives
\[
 \Real Q \ge \tfrac{8}{13}A(\rho)^2-1-2e^{-5/4}
        >\tfrac92e^{-5/4}-1>q+t^2s \ge \Real Q,
\]
a contradiction.

\smallskip\noindent\emph{Remaining cubic ranges.}
For larger $s$, choose $\lambda$ from the following table. Its last two
columns are rational lower bounds verified in Appendix~\ref{app:scalar};
the quantities are defined immediately below.
\begin{center}
\begin{tabular}{ccccc}
\toprule
$l$&$u$&$\lambda$&$E_0(u)>$&$E_1(l)>$\\
\midrule
$196/1000$&$23/100$&$6$&$0.037$&$0.011$\\
$23/100$&$29/100$&$5$&$0.019$&$0.066$\\
$29/100$&$1/3$&$4$&$0.058$&$0.060$\\
\bottomrule
\end{tabular}
\end{center}
Here $s\in[l,u]$, $A_R=A(R)$, $b(s)=2/3-s^2/2$, and
\begin{align*}
 E_0(s)&=P_\lambda(A_R)-\lambda(q+t^2s),\\
 E_1(s)&=P_\lambda(A_R)-\lambda(q-t^2s)-4t^3b(s).
\end{align*}
The function $E_0$ decreases and $E_1$ increases with $s \ge 0$. Thus, the table
proves both $E_0$ and $E_1$ are positive throughout the corresponding range.

Write $a=u_4+iv_4$. The choice of center rests on the identity
\[
 \Real(ae^{3i\theta})-\bigl(u_4\cos\theta-3v_4\sin\theta\bigr)
 =-4\Real(ae^{i\theta})\sin^2\theta ,
\]
which follows from $\cos3\theta-\cos\theta=-4\cos\theta\sin^2\theta$ and
$\sin3\theta-3\sin\theta=-4\sin^3\theta$. A translation by $c$ contributes the
first harmonic $\Real(ce^{-i\theta})$. We choose $c$ so that, after the
tangent-line correction below, this harmonic is exactly
$t^3(u_4\cos\theta-3v_4\sin\theta)$; the quartic coefficient then enters only
through the factor $\sin^2\theta$. Define
\[
 H_\lambda=\frac{(A_R+\lambda)^2}{4}
                    \left(\frac t{R^2}-\frac qt\right),\qquad
 c=\frac{t^3}{H_\lambda}(u_4-3iv_4).
\]
The scalar checks give $H_\lambda>22$ for all three choices.
Consequently $\abs{c} \le 2t^3/H_\lambda<1/11$ and
$R+\abs{c} < 0.7 < t/\sqrt q$.

Suppose an omitted $w=\rho e^{i\theta}$ lies in $D(c,R)$.
The function $G_\lambda(\rho)=P_\lambda(A(\rho))$ is decreasing and
convex on $(0,t/\sqrt q)$, with $G_\lambda'(R)=-H_\lambda$.
This follows from $P_\lambda'(A)=(A+\lambda)^2/4>0$,
$P_\lambda''(A)=(A+\lambda)/2>0$, and $A''(\rho)>0$.
For $w$ in the open disk,
\[
 \rho=\Real(we^{-i\theta})
     =\Real\bigl((w-c)e^{-i\theta}\bigr)+\Real(ce^{-i\theta})
     <R+\Real(ce^{-i\theta}).
\]
The tangent inequality for the convex function $G_\lambda$ at $\rho=R$
and the identity
$H_\lambda\Real(ce^{-i\theta})=t^3(u_4\cos\theta-3v_4\sin\theta)$,
which is the definition of $c$, give
\[
 G_\lambda(\rho)>P_\lambda(A_R)
               -t^3(u_4\cos\theta-3v_4\sin\theta).
\]
Apply \eqref{eq:support} and \eqref{eq:boundedQJ}, and put
$x=\sin^2\theta$. A necessary condition is
\[
 E_0(s)+\bigl(2\lambda t^2s-4t^3\Real(ae^{i\theta})\bigr)x<0.
\]
By \eqref{eq:abound}, its left side is at least
\[
 (1-x)E_0(s)+xE_1(s)>0.
\]
This contradiction shows that in every case the open disk --- $D(0,R)$
in the small cubic range, $D(c,R)$ otherwise --- contains no omitted
value. So it lies in $f(\D)$, hence $R_f \ge  R$, and
Lemma~\ref{lem:normalization} completes the proof.
\end{proof}

Figure~\ref{fig:analytic} shows the proof for $\lambda=6$ in the polar
coordinates $w=\rho e^{i\theta}$. By \eqref{eq:support} and
\eqref{eq:boundedQJ}, an omitted value satisfies
$G_\lambda(\rho) \le \Real(J+\lambda Q)$, that is,
$\rho \ge \rho_\lambda(\theta)$ for an explicit curve $\rho_\lambda$. A disk
centered at the origin is a horizontal line in these coordinates, while the
boundary of $D(c,R)$ is, to first order, the curve
$R+\Real(ce^{-i\theta})$. The tangent inequality replaces $\rho_\lambda$ by
the lower bound $R+(G_\lambda(R)-\Real(J+\lambda Q))/H_\lambda$. Its
difference from $R+\Real(ce^{-i\theta})$ is the left side of the necessary
condition in the proof divided by $H_\lambda$, which is at least
$((1-x)E_0(s)+xE_1(s))/H_\lambda$.

\begin{figure}[ht]
\centering
\includegraphics[width=\textwidth]{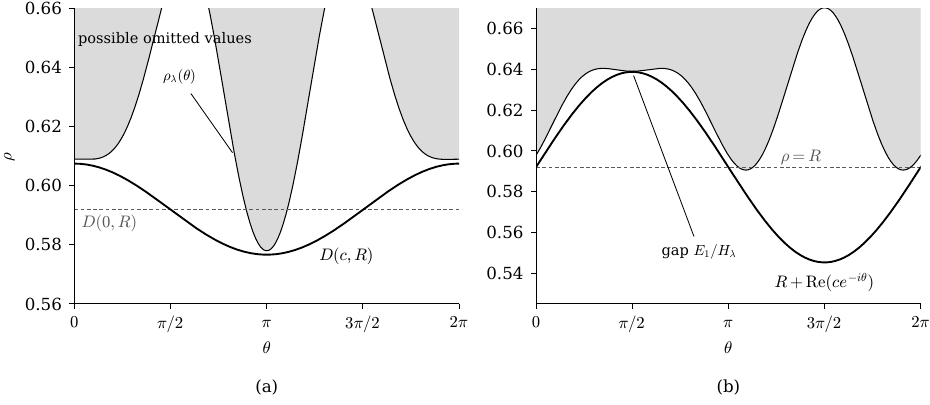}
\caption{The proof of Theorem~\ref{thm:hand} for $\lambda=6$, drawn in
the coordinates $(\theta,\rho)$ of $w=\rho e^{i\theta}$. Shaded: values of
$w$ not excluded by the supporting form. (a) $s=0.23$, $a=b(s)$, with the
exact constraint $\rho \ge \rho_\lambda(\theta)$. The disk $D(0,R)$ (dashed
line) meets the shaded region near $\theta=\pi$, where
$\rho_\lambda=0.578$; the translated disk $D(c,R)$, $c=0.0154$ (its
boundary is the solid curve), misses it. (b) $s=0.196$, $a=-ib(s)$, with
the tangent-line bound used in the proof and the curve
$R+\Real(ce^{-i\theta})$, $c=0.0467i$. The gap between them is smallest
at $\theta=\pi/2$, where it equals $E_1(s)/H_\lambda=2.8\cdot10^{-4}$, the
tightest entry of the table. All values listed here are rounded.}
\label{fig:analytic}
\end{figure}

\begin{remark}\label{rem:handcap} The constant $0.592$ is close to the limit of
this argument. The small-cubic step covers $s \le s_0(R,t)$, where $q+t^2s_0$ is
the largest value of the right-hand side of \eqref{eq:FS} at $A=A(R)$. It equals
$2e^{-2k}(2k+1)-1$, attained at $\lambda=k/(1+k)$, where $2e^{-k}(1+k)=A(R)$.
The supporting forms must cover $[s_0,1/3]$. As $R$ grows, $s_0$ decreases while
the range of $s$ covered by each $\lambda$ shrinks. Numerical optimization
within this reduction, over $t\in[0.985,0.999]$ and real $\lambda$, suggests a
limiting radius near $0.5925$, with the best values for $t$ near $0.996$. This
motivates retaining more of the complex matrix inequality, which is the purpose
of Sections~\ref{sec:matrix} and~\ref{sec:certificate}.
\end{remark}

\section{An explicit disk from the complex matrix}\label{sec:matrix}

If we are willing to use some computer assistance, we can do better than $0.592$,
by keeping more information from the Grunsky matrix.
For $a=u+iv$ and $q=\abs{a}^2$, define
\begin{align}
 A_0(s,q)&=\frac25-\frac q3-\frac{s^2}{5},\nonumber\\
 B_0(s,q)&=\frac{25}{24}-\frac{3q}{2}-\frac{s^2}{4},\nonumber\\
 c(s,a)&=s\bigl(A_0(s,q)u-iB_0(s,q)v\bigr),\label{mat:eq:center}\\
 R&=\frac{306121}{500000}=0.612242.\label{mat:eq:R}
\end{align}
\begin{theorem}\label{mat:thm:disk}
For every $0 \le s \le 1/3$ and $\abs{a} \le b(s)$,
\[
 \overline D(c(s,a),R)\cap K_{s,a}=\varnothing.
\]

\end{theorem}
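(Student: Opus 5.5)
The aim is to show that every $w$ with $\abs{w-c}\le R$ has $\norm{M_{s,a}(w)}>1$, where $c=c(s,a)$. I would certify this by exhibiting, for each such $w$, a unit vector $v\in\C^2$ with $\abs{v^TM_{s,a}(w)v}>1$. This suffices because $\abs{v^TMv}\le\norm{M}\norm{v}^2$. The approach mirrors Lemma~\ref{lem:support}, but the test vectors are now complex and allowed to depend on $w$.

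\emph{Parametrization.} Write $w=c+re^{i\phi}$ with $0\le r\le R$, and set $z=1/w$. The entries of $M_{s,a}(w)$ are polynomials in $z$, $s$ and $a$, namely $-z/2$, $\sqrt3(s/2-z^2/8)$ and $-3a/2-z^3/8$. The parameter space is then $s\in[0,1/3]$, $a=u+iv$ with $u^2+v^2\le b(s)^2$, $\phi\in[0,2\pi]$ and $r\in[0,R]$. This is compact once we note that $\abs{w}\ge R-\abs{c}$ is bounded below, since $\abs{c}$ is small by the explicit formula~\eqref{mat:eq:center}.

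\emph{Radial monotonicity.} I would first reduce to the circle $r=R$. Consider $\psi(r)=\norm{M_{s,a}(c+re^{i\phi})}$. Near $w=0$ the norm is huge: $\abs{z^3}/8$ dominates, and $K_{s,a}$ stays away from a neighbourhood of the origin. I would establish an elementary criterion: if a fixed quadratic form $\abs{v^TMv}$ exceeds $1$ at $r=R$ and its derivative along the ray toward $c$ has a definite sign, then the bound persists for all $r\le R$. Alternatively, one can show directly that $\norm{M}>1$ whenever $\abs{w}\le\rho_0$ for an explicit $\rho_0$, using the $z^3$ entry (for instance $\abs{z^3}/8-3b(s)/2>1$). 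Either way, the problem reduces to the boundary circle plus a scalar inequality on an annulus. So the statement becomes a finite family of scalar inequalities in the four real parameters $(s,u,v,\phi)$, together with $r$ if monotonicity is not fully available.

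\emph{Covering by boxes.} I would subdivide the parameter box into cells. On each cell, a floating-point top singular vector, computed at the cell's centre and then fixed, serves as the test vector $v$, and $\abs{v^TMv}>1$ is verified over the whole cell with interval arithmetic. Cells where this fails are bisected. Near $s=0$ the centre $c$ tends to $0$, and the constraint from $-z/2$ and the $z^2$ term alone must exclude the disk; the margin there should be comfortable.

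\emph{Main obstacle.} I expect the hardest part to be the near-critical region where $\norm{M}$ is barely above $1$ on the circle, presumably $s$ near $1/3$ with $\abs{a}$ near $b(s)$ and particular directions $\phi$. Two things go wrong there at once. The top singular vector rotates quickly with the parameters, so a single fixed $v$ per cell loses the margin. And the polynomial centre $c(s,a)$ was tuned so that the margin is tiny. My first attempt would be to let $v$ depend affinely on the parameters within a cell, or to use the $2\times2$ criterion $\norm{M}>1\iff \abs{\det M}^2+1<\sum\abs{m_{ij}}^2$ with $\abs{\det M}<1$ failing. The latter is rotation-free and avoids eigenvectors entirely, at the cost of weaker interval enclosures because of dependency effects.
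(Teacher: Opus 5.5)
Your certificate mechanism matches the paper's. You fix a complex test vector on each parameter box, propose it in floating point, bound the non-conjugated form $v^TMv$ against $\norm{v}^2$, and check this in interval arithmetic. The gap is in the radial reduction.

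First, the compactness claim is false as stated. The bound $\abs{w}\ge R-\abs{c}$ holds only on the circle $\abs{w-c}=R$. Since $\abs{c}\le 25/108<R$, the closed disk contains $w=0$, where $M_{s,a}$ has a pole. This is easily repaired, because $\norm{M}\ge\abs{m_{11}}=1/(2\abs{w})>1$ for $\abs{w}<1/2$.

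Second, and more seriously, nothing supports passing from the circle $r=R$ to the interior. Along rays from $c$ the entries $w^{-1},w^{-2},w^{-3}$ have no sign structure. Verifying that the $r$-derivative of $\abs{v^TMv}$ has a fixed sign therefore needs a covering in $r$ anyway. Nor is there a minimum principle to fall back on: $\log\norm{M(w)}$ is subharmonic on $w\ne0$, so boundary values bound interior values from above, not from below. What you actually have is a five-parameter covering in $(s,u,v,\phi,r)$ over the compact part $\abs{w}\ge1/2$. That can be made rigorous, since for complex symmetric $M$ one has $\max_{\norm{v}=1}\abs{v^TMv}=\norm{M}$, so a finite covering exists if the statement is true. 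But it is not the four-parameter reduction you announce, and it is costlier.

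The missing idea is to take polar coordinates about the pole $w=0$, not about $c$. After the congruence by $\operatorname{diag}(e^{i\theta/2},e^{3i\theta/2})$, every $\rho$-dependent entry of $\widetilde M(\rho)$ is real and negative. Certify with $\Real(z^T\widetilde Mz)<-\norm{z}^2$ rather than $\abs{z^TMz}>\norm{z}^2$. Then $T(\rho)=V-A/(2\rho)-3B/(4\rho^2)-3C/(8\rho^3)$ is increasing in $\rho$ as soon as the witness satisfies $A,B,C\ge0$, which is a condition on the witness alone.

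Because $0\in D(c,R)$, each ray from the origin meets the closed disk in a segment $(0,\rho_{\max}(\theta)]$. The polynomial bound $\rho_{\max}\le p$, from $\sqrt{1-x}\le1-x/2-x^2/8$, leaves the single inequality $P<0$ on the four-dimensional box in $(s,\abs{a},\arg a,\theta)$. This handles the neighbourhood of $0$ with no separate argument.

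Your guess about where the difficulty lies is also off. The binding case is $s=1/3$, $a=0$ (so $c=0$) at $w=\pm r_*$, where the radial margin is about $7.7\cdot10^{-7}$. By contrast, for $s=1/3$, $a=11/18=b(1/3)$ one has $\dist(c,K_{s,a})\approx0.6161$, a comfortable margin. At the binding point $M$ is real symmetric with well-separated eigenvalues, $-1$ and about $-0.36$. So one fixed witness per box suffices there, and no parameter-dependent test vector is needed.
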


The translation is essential. In a floating-point scan of the parameter
set, the worst case for a disk centered at the origin is $s=1/3$,
$a=11/18=b(1/3)$. There, the largest disk centered at~$0$ that misses
$K_{s,a}$ has radius only about $0.568$, while the translated disk
$D(c,R)$ still misses $K_{s,a}$, see Figure~\ref{fig:kset}\,(a). For
$s=1/3$, $a=0$ the center is~$0$, and $R$ is limited by the obstruction
of Section~\ref{sec:obstruction}, see Figure~\ref{fig:kset}\,(b).

\begin{figure}[ht]
\centering
\includegraphics[width=\textwidth]{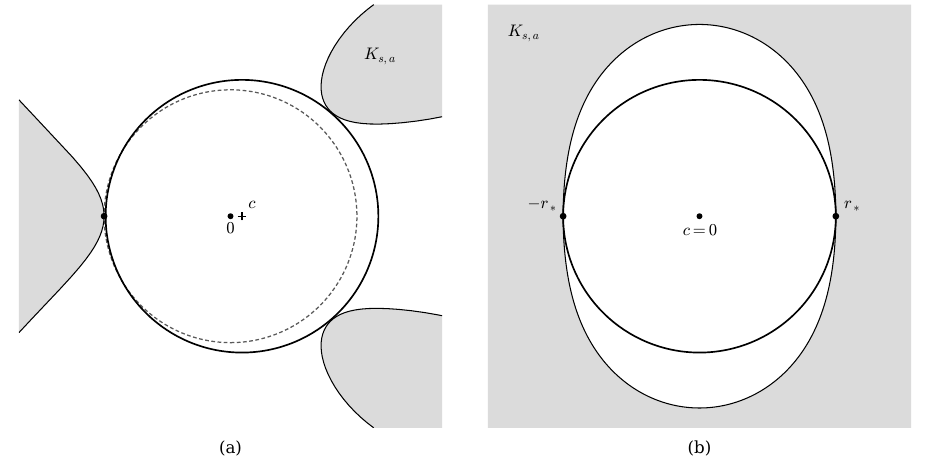}
\caption{The set $K_{s,a}$ (shaded) contains every omitted value
by Corollary~\ref{cor:omitted}. (a) $s=1/3$, $a=11/18$. The largest disk
centered at $0$ that misses $K_{s,a}$ has radius $0.5676$ (dashed). The
disk $D(c,R)$ of Theorem~\ref{mat:thm:disk}, with $c=c(s,a)=0.0516$ and
$R=0.612242$, also misses it (solid); in fact
$\dist(c,K_{s,a})=0.6161$. (b) $s=1/3$, $a=0$. Here $c=0$, and $D(0,R)$
nearly touches $K_{s,a}$ at $\pm r_*$, the obstruction of
Section~\ref{sec:obstruction}. The values are rounded.}
\label{fig:kset}
\end{figure}

\subsection{A polynomial radial bound}

Both weights in \eqref{mat:eq:center} are non-negative on the parameter set,
and are at most $25/24$. Hence
\[
 \abs{c} \le \frac{1}{3} \cdot \frac{25}{24} \cdot \frac{2}{3}=\frac{25}{108} \eqcolon L<R.
\]
For a direction $\theta$, set
\[
 h=\Real(ce^{-i\theta}),\qquad\text{and }
 t=(\Real c)\sin\theta-(\Imag c)\cos\theta.
\]
The positive radial endpoint of the disk is
$h+\sqrt{R^2-t^2}$. We use the following polynomial upper bound:
\begin{equation}\label{mat:eq:p}
 \rho \le p \coloneq R+h-\frac{t^2}{2R}-\frac{t^4}{8R^3}
 \quad\text{whenever}\quad
 \rho e^{i\theta}\in\overline D(c,R),\ \rho>0.
\end{equation}
Indeed, for $0 \le x \le 1$,
\[
 \sqrt{1-x} \le 1-\frac x2-\frac{x^2}{8},
\]
since the right side is positive and its square minus $1-x$ is
$x^3/8+x^4/64 \ge 0$. Apply this inequality with $x=t^2/R^2$.
Also
\[
 p \ge  R-L-\frac{L^2}{2R}-\frac{L^4}{8R^3}>0.
\]
The fourth-order term sharpens the bound $\rho \le R+h$ while keeping
the certificate rational in the circle coordinates.

\subsection{Complex quadratic forms and radial monotonicity}

For $w=\rho e^{i\theta}$, the unitary congruence with
$\operatorname{diag}(e^{i\theta/2},e^{3i\theta/2})$ changes $M_{s,a}(w)$ to
\begin{equation}\label{mat:eq:transformed}
 \widetilde M(\rho)=
 \begin{pmatrix}
 -1/(2\rho)&\sqrt3\bigl(se^{2i\theta}/2-1/(8\rho^2)\bigr)\\
 \sqrt3\bigl(se^{2i\theta}/2-1/(8\rho^2)\bigr)
 &-3ae^{3i\theta}/2-1/(8\rho^3)
 \end{pmatrix}.
\end{equation}
Take a fixed vector
\[
 z=(\alpha+i\beta,\sqrt3(\gamma+i\delta))^T
\]
and abbreviate
\begin{align*}
 A&=\alpha^2-\beta^2,& B&=\alpha\gamma-\beta\delta,
 &C&=\gamma^2-\delta^2,\\
 E&=\alpha\delta+\beta\gamma,& F&=2\gamma\delta,
 &N&=\alpha^2+\beta^2+3(\gamma^2+\delta^2).
\end{align*}
Let $d+ie=ae^{3i\theta}$, and put
\begin{align}
 V&=N+3s(B\cos2\theta-E\sin2\theta)
                  -\frac92(Cd-Fe),\label{mat:eq:V}\\
 T(\rho)&=\norm{z}^2+\Real\bigl(z^T\widetilde M(\rho)z\bigr)\nonumber\\
 &=V-\frac{A}{2\rho}-\frac{3B}{4\rho^2}-\frac{3C}{8\rho^3}.
 \label{mat:eq:T}
\end{align}
Note that the transpose in this formula is not a conjugate transpose.
Nevertheless, the contraction bound implies $\abs{z^T\widetilde Mz} \le
\norm{z}^2$, so $T(\rho) \ge 0$ is necessary for $\norm{M_{s,a}(w)} \le 1$.

\begin{lemma}[Monotone witness]\label{mat:lem:witness}
If $A,B,C \ge 0$ and
\begin{equation}\label{mat:eq:P}
 P:=8p^3V-4p^2A-6pB-3C<0,
\end{equation}
then every point $\rho e^{i\theta}$ with $0<\rho \le p$ is outside
$K_{s,a}$.
\end{lemma}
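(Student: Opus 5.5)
The plan is to show that the necessary condition $T(\rho)\ge0$ from the paragraph before the lemma fails for every $0<\rho\le p$; then $\norm{M_{s,a}(w)}\le1$ fails, so $w=\rho e^{i\theta}\notin K_{s,a}$.

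First I would record exactly what is fixed. The direction $\theta$, the parameters $s,a$ and the test vector $z$ are all held fixed, so $V$, $A$, $B$, $C$ are constants and only $\rho$ varies. By \eqref{mat:eq:T},
\[
 T(\rho)=V-\frac{A}{2\rho}-\frac{3B}{4\rho^2}-\frac{3C}{8\rho^3}.
\]
Next comes radial monotonicity. Since $A,B,C\ge0$, each of the three negative terms is nondecreasing in magnitude as $\rho$ decreases. Hence $T$ is nondecreasing on $(0,\infty)$:
\[
 T'(\rho)=\frac{A}{2\rho^2}+\frac{3B}{2\rho^3}+\frac{9C}{8\rho^4}\ge0.
\]
So for $0<\rho\le p$ we have $T(\rho)\le T(p)$, and it suffices to prove $T(p)<0$.

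For the endpoint, recall that $p>0$ was shown after \eqref{mat:eq:p}. Multiplying $T(p)$ by $8p^3>0$ gives
\[
 8p^3T(p)=8p^3V-4p^2A-6pB-3C=P.
\]
The hypothesis $P<0$ therefore gives $T(p)<0$, and hence $T(\rho)<0$ on $(0,p]$.

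To conclude, I would verify the implication that $T(\rho)<0$ excludes $w$. Suppose $\norm{M_{s,a}(w)}\le1$. The congruence by $D=\operatorname{diag}(e^{i\theta/2},e^{3i\theta/2})$ preserves the operator norm, so $\norm{\widetilde M(\rho)}\le1$. Then, by Cauchy--Schwarz,
\[
 \abs{z^T\widetilde M z}=\abs{\langle \widetilde M z,\bar z\rangle}\le\norm{z}^2,
\]
which forces $T(\rho)\ge0$, a contradiction. Hence $\rho e^{i\theta}\notin K_{s,a}$ for all $0<\rho\le p$.

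The only point needing care is confirming that the expansion \eqref{mat:eq:T} of $\Real(z^T\widetilde Mz)$ really has $\rho$ only in the displayed terms, with coefficients $-A/2$, $-3B/4$, $-3C/8$. I would check this by expanding $z^T\widetilde M z$ entrywise:
\begin{itemize}
\item the $(1,1)$ entry gives $-(\alpha+i\beta)^2/(2\rho)$, whose real part is $-A/(2\rho)$;
\item the two off-diagonal entries give $2\sqrt3\cdot\sqrt3(\alpha+i\beta)(\gamma+i\delta)$ times $\bigl(se^{2i\theta}/2-1/(8\rho^2)\bigr)$, whose $\rho$-part has real part $-3B/(4\rho^2)$;
\item the $(2,2)$ entry gives $3(\gamma+i\delta)^2$ times $\bigl(-3ae^{3i\theta}/2-1/(8\rho^3)\bigr)$, whose $\rho$-part has real part $-3C/(8\rho^3)$.
\end{itemize}
All the remaining contributions are independent of $\rho$ and make up $V$. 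Since the paper already states this formula, the proof is essentially the monotonicity observation combined with evaluation at $p$.
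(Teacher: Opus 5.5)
Your proposal is correct and follows the paper's proof. The three steps are the same: the sign conditions $A,B,C\ge0$ make $T$ nondecreasing, the identity $8p^3T(p)=P$ with $p>0$ gives $T(p)<0$, and this contradicts the necessary contraction condition $T(\rho)\ge0$. Your entrywise check of \eqref{mat:eq:T} and the Cauchy--Schwarz step only spell out what the paper states in the paragraph before the lemma.
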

\begin{proof}
The derivative of \eqref{mat:eq:T} is
\[
 T'(\rho)=\frac{A}{2\rho^2}+\frac{3B}{2\rho^3}
                    +\frac{9C}{8\rho^4} \ge 0.
\]
As $p>0$, \eqref{mat:eq:P} says $T(p)<0$. Thus $T(\rho)<0$ for
$0<\rho \le p$, contradicting the necessary contraction condition.
\end{proof}

The complex quadratic form retains the imaginary part of the matrix:
$\beta$ and $\delta$ are allowed to be nonzero. The three elementary sign
conditions keep the argument monotone in the radius. No claim of radial
monotonicity for the full operator norm is needed.

\section{The interval certificate}\label{sec:certificate}

The center satisfies $c(s,\bar a)=\overline{c(s,a)}$ and $c(s,-a)=-c(s,a)$.
Correspondingly $K_{s,\bar a}=\overline{K_{s,a}}$, since
$M_{s,\bar a}(\bar w)=\overline{M_{s,a}(w)}$, and $K_{s,-a}=-K_{s,a}$, since,
with $D=\operatorname{diag}(1,-1)$,
\[
 M_{s,-a}(-w)=-D M_{s,a}(w)D.
\]
These symmetries reduce to $u,v \ge 0$, with all directions still included.
For the transformed matrix, they act by conjugation or the identity.
Replacing a witness by its conjugate preserves its dyadic coordinates
and the conditions $A,B,C \ge 0$, so the same reduction applies to the
finite certificate below. The verifier uses
\[
 s=x_0/3,\qquad a=b(s)x_1 e^{i\pi x_2/2},\qquad
 \theta=2\pi x_3,\qquad x\in[0,1]^4.
\]

\begin{lemma}[Finite certificate]\label{lem:finite-certificate} For $R$ and
$c(s,a)$ in \eqref{mat:eq:center}--\eqref{mat:eq:R}, every $0 \le s \le 1/3$,
$\abs{a} \le b(s)$ and real $\theta$ admits numbers
$\alpha,\beta,\gamma,\delta\in 2^{-16}\mathbb Z$ such that $A,B,C \ge 0$ and
$P<0$, with the notation of \eqref{mat:eq:V}--\eqref{mat:eq:P}.
\end{lemma}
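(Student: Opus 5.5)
The plan is a branch-and-bound verification over the reduced parameter cube $x\in[0,1]^4$. By the symmetries listed just before the lemma we may assume $u,v\ge0$, and the parametrization $s=x_0/3$, $a=b(s)x_1e^{i\pi x_2/2}$, $\theta=2\pi x_3$ covers this reduced set together with all directions. The key observation is that a single witness $z$ with dyadic coordinates $\alpha,\beta,\gamma,\delta$ certifies an entire box. The quantities $A,B,C,N$ depend only on $z$, so the conditions $A,B,C\ge0$ are exact rational checks independent of the box. $P$ is an explicit polynomial in $s$, $u$, $v$, $\cos\theta$, $\sin\theta$ (through $V$, $d+ie=ae^{3i\theta}$, and $p$, which involves $h$, $t$ and $c(s,a)$). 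It can therefore be enclosed over a box by interval (or Taylor-model) arithmetic in Arb. If the upper endpoint of this enclosure is negative, then $P<0$ holds at every point of the box for this one $z$, and the lemma is proved on that box.

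The procedure then runs as follows. Start from the full cube. For a box, evaluate at its midpoint the transformed matrix $\widetilde M(\rho)$ of \eqref{mat:eq:transformed} at $\rho=p$. Compute in floating point the vector that extremizes $\Real(z^T\widetilde Mz)$; for a complex symmetric matrix this comes from a Takagi factorization, i.e.\ the singular vector for the largest singular value, suitably phased so that $z^T\widetilde Mz$ is close to $-\norm{\widetilde M}\,\norm{z}^2$. Rescale it so the second component has the form $\sqrt3(\gamma+i\delta)$, and round $\alpha,\beta,\gamma,\delta$ to $2^{-16}\mathbb Z$.

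Next, check the sign conditions. If $A$, $B$ or $C$ fails to be nonnegative, try nearby candidates: flip the overall sign or phase by $\pm i$, or zero out a small component such as $\beta$ or $\delta$. The natural extremal vectors near the worst directions are nearly real with $\alpha,\gamma>0$, so this should rarely be needed. Then enclose $P$ over the box. If the enclosure is not negative, bisect the box along the coordinate with the largest influence, estimated by derivative bounds or simply the widest scaled side, and recurse. Termination on every branch, with all accepted boxes covering the cube, proves the lemma. The floating-point step only proposes $z$; acceptance depends solely on rigorous checks, which matches the paper's description.

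The main obstacle is the near-tight region, specifically $s=1/3$, $a$ near $0$, $\theta$ near $0$ or $\pi$. There the center is near $0$ and the disk almost touches $K_{s,a}$ at $\pm r_*$, the obstruction of Section~\ref{sec:obstruction}. In this region the margin $-P$ is tiny, so boxes must be very small and the dyadic rounding at $2^{-16}$ must not destroy negativity.

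To control this, I would first verify analytically, or by a sharper local enclosure, that the optimal $z$ depends continuously on the parameters near this point. I would then use a first-order Taylor enclosure of $P$ in the box, exploiting that $\partial P/\partial\theta$ vanishes at the symmetric point, rather than naive interval evaluation. This should avoid the dependency blow-up from the repeated occurrences of $\cos\theta$, $\sin\theta$ and $u,v$. If the margin still collapses, the fallback is to choose $z$ per box by a small local optimization of the worst-case corner value instead of the midpoint.

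The cross-check by an independent verifier, for example exact rational evaluation with outward-rounded trigonometric bounds, gives confidence in the final certificate.
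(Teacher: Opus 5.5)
Your plan is essentially the paper's proof. Both use the same reduced cube and parametrization, propose a floating-point witness at the box midpoint with $\rho=p(m)$ (your phased Takagi vector is exactly the paper's smallest-eigenvalue vector of the real form $\bigl(\begin{smallmatrix}I+H&-G\\-G&I-H\end{smallmatrix}\bigr)$), check $A,B,C\ge0$ exactly with a fallback, and accept boxes through interval and mean-value enclosures of $P$ with bisection.

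Two small remarks. First, the continuity analysis you propose near $s=1/3$, $a=0$ is unnecessary, because each accepted box carries its own fixed witness. Second, your sign and phase fallbacks do little: negating $z$ leaves $A,B,C$ unchanged, and multiplying by $i$ reverses the sign of $\Real(z^T\widetilde Mz)$. The paper simply falls back to $(\alpha,\beta,\gamma,\delta)=(1,0,1,0)$. In either version the lemma is established only by a run that actually exhausts the stack, which the paper reports and which your plan leaves hypothetical.
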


The proof is the exhaustive verification described next. Every accepted
box has one fixed witness valid throughout that box; no continuity or
consistency between witnesses on different boxes is required.

\subsection{Choosing a witness does not require a certified eigensolver}
For each box, at its midpoint $m$, write the floating-point approximation
of \eqref{mat:eq:transformed}, evaluated at $\rho=p(m)$, as $H+iG$. In the
coordinates $(\Real z_1,\Real z_2,\Imag z_1,\Imag z_2)$, a smallest-eigenvalue
vector of
\[
 \begin{pmatrix}I+H&-G\\-G&I-H\end{pmatrix}
\]
suggests a complex vector $z$. After dividing its second component by
$\sqrt3$, the four real coordinates $\alpha,\beta,\gamma,\delta$ are
scaled and rounded to integer multiples of $2^{-16}$.

These numbers are only proposals. The verifier checks $A,B,C \ge 0$
rigorously, then fixes the proposed vector throughout the entire box.
If the sign conditions fail, it uses the real vector with
$(\alpha,\beta,\gamma,\delta)=(1,0,1,0)$ instead.
No eigenvalue, error estimate, or sign returned by the floating-point
computation is used to accept a box. Any proposed dyadic vector that
passes the exact conditions is a legitimate witness in
Lemma~\ref{mat:lem:witness}.

\subsection{Enclosing the scalar inequality}
For each box $B$, with midpoint $m$ and coordinate half-widths $h_j$,
the verifier encloses $P(B)$ by interval evaluation and by
\begin{equation}\label{mat:eq:mean}
 P(B)\subset P(m)+\sum_{j=0}^3
   \bigl(\partial_jP(B)\bigr)[-h_j,h_j].
\end{equation}
The derivatives are computed by interval automatic differentiation,
with the proposed witness held fixed. A box closes only if a computed
upper bound is strictly negative. Otherwise it is bisected, retaining
both children. Exhausting the subdivision stack proves every case in
the original cube. Resource limits return
\texttt{INCOMPLETE}, never a successful verdict. All interval
computations use Arb ball arithmetic~\cite{Arb} at $80$-bit precision,
through \texttt{python-flint}~0.9.0.

The exhaustive run returned \texttt{VERIFIED} after visiting
$187{,}263$ boxes, of which $93{,}632$ are terminal, with maximal depth
$49$; it took about $42$ seconds. As a check against programming errors,
an independently written verifier, using exact integer-dyadic interval
arithmetic and a rational parametrization of the angles, reproduces the
result. The source code of both verifiers, run records with hashes of all
terminal boxes and their witnesses, and further self-checks are in the
supplementary archive~\cite{WikArchive}.

This proves Lemma~\ref{lem:finite-certificate} when the stack is exhausted.
Lemma~\ref{mat:lem:witness} and \eqref{mat:eq:p} then exclude the entire closed
disk, proving Theorem~\ref{mat:thm:disk}. By Corollary~\ref{cor:omitted},
the open disk $D(c,R)$ contains no omitted value, so it lies in
$f(\D)$; the rotation reduction of Section~\ref{sec:normalization} gives
Theorem~\ref{thm:computer}.

\subsection{An obstruction for the chosen center}\label{sec:obstruction}

There is a concrete obstruction for this explicit center formula.
Take $s=1/3$, $a=0$, so that $c=0$, and take $w=r>0$ real. Then
\[
 192r^4\det(I+M_{1/3,0}(r))
   =176r^4-96r^3+24r^2-24r+3 \eqcolon F(r).
\]
Let $r_*$ be the unique root of $F$ in $(0.6,0.7)$. Exact rational
substitution brackets it by
\[
 0.612242772171<r_*<0.612242772172.
\]
For uniqueness, $F'(0.6)>0$ and
$F''(r)=2112r^2-576r+48>0$ on the real line.
At this root $-1$ is an eigenvalue of the real symmetric matrix
$M_{1/3,0}(r_*)$. The other eigenvalue is
\[
 1-\frac1{2r_*}-\frac1{8r_*^3}\in(-0.37,-0.35),
\]
so $\norm{M_{1/3,0}(r_*)}=1$. In particular $r_*\in K_{1/3,0}$.

Thus no uniform radius larger than $r_*$ can work with \eqref{mat:eq:center};
the present certificate is within $7.72172\cdot10^{-7}$ of that ceiling. This is
a ceiling for this \emph{center rule}, not an upper bound for $\U$ or a proof of
optimality among all centers. The coefficient pair $s=1/3,a=0$ is allowed by the
matrix relaxation; its presence is enough to obstruct this rule without any
assertion that it is extremal for the original function problem.

A materially larger improvement with this matrix therefore requires changing the
center rule, or exploiting further constraints on the coefficients or omitted
values. A higher-order compression, retaining Grunsky rows beyond $\{1,3\}$,
removes this particular obstruction and an improved certificate along those
lines is in preparation.

\section{An improved upper bound}\label{sec:upper}

For a domain $\Omega\subset\C$ and $w\in\Omega$ let $r(\Omega,w)$ be the
conformal radius, so that $r(f(\D), f(0)) = \abs{f'(0)}$ for univalent $f$.
Let $\rho(\Omega)$ be the inradius, and let $\capa F$ be the logarithmic
capacity of a compact set $F$. Since $R_f=\rho(f(\D))$ and both quantities
scale linearly,
\begin{equation}\label{up:eq:reform}
 \U=\inf\Bigl\{\frac{\rho(\Omega)}{r(\Omega,w)}:\ \Omega\subsetneq\C
 \text{ simply connected},\ w\in\Omega\Bigr\}.
\end{equation}
In particular, every simply connected domain $D \ni 0$ gives $\U \le
\rho(D)/r(D,0)$. All known upper bounds come from domains of Goodman
type~\cite{Robinson,Goodman,BellerHummel,CarrollOrtegaCerda,
OrtegaCerdaPridhnani}: radial slits from the cube roots of unity and from twice
the cube roots of $-1$, completed by further boundary arcs that keep the
inradius at~$1$. Carroll~\cite{Carroll08} showed that a boundary arc of an
extremal domain that touches no extremal disk is harmonically symmetric with
respect to the origin, and Carroll and Ortega-Cerd\`a~\cite{CarrollOrtegaCerda}
used this condition, together with Fedorov's solution of a four-point
P\'olya--Chebotarev problem, to construct the best rigorous bound so far. We
keep the first two Goodman generations, insert five free arcs in each sector of
opening $\pi/3$, and truncate by a circle. The free arcs are designed as
critical vertical trajectories of a quadratic differential, but the proof does not use
this: the arcs in the theorem are explicit polynomial curves, and every estimate
is checked in interval arithmetic.

\subsection{The quotient map}
Fix $R>2$, put $\omega=e^{2\pi i/3}$, and let
\[
 \Gamma_R=\bigcup_{k=0}^2\omega^k\Bigl([1,R]\cup e^{i\pi/3}[2,R]\Bigr)
\]
be the first two Goodman generations inside $\abs w<R$. Define
\[
 \Phi_R(t)=\frac{R^6+1-t-R^6/t}{\Delta_R},\qquad
 \Delta_R=\frac98R^6+9,\qquad
 m_\pm=\frac{(R^3\pm1)^2}{\Delta_R}.
\]

\begin{lemma}\label{up:lem:quotient}
\begin{enumerate}
\item $\Phi_R$ maps $A_R=\{0<\abs t<R^3\}\setminus([1,R^3]\cup[-R^3,-8])$
conformally onto $\C\setminus[0,1]$. It maps $[1,R^3]$, the circle
$\abs t=R^3$ and $[-R^3,-8]$ onto $[0,m_-]$, $[m_-,m_+]$ and $[m_+,1]$,
respectively. Moreover $\Phi_R(\bar t)=\overline{\Phi_R(t)}$, $\Imag\Phi_R(t)$
has the sign of $\Imag t$, and $\Phi_R(t)=-R^6/(\Delta_Rt)+O(1)$ as $t\to0$.
\item Let $\gamma_1,\dots,\gamma_n$ be pairwise disjoint arcs in the closed
upper half-plane such that $\gamma_j\cap\mathbb R=\{b_j\}$ is one endpoint,
with $0<b_j<1$. Put $T=[0,1]\cup\bigcup_j(\gamma_j\cup\bar\gamma_j)$ and
\[
 D_T=\{0\}\cup\bigl\{w : w^3\in A_R, \Phi_R(w^3)\notin T\bigr\}.
\]
Then $D_T$ is a simply connected domain containing $0$, invariant under
$w\mapsto\omega w$ and $w\mapsto\bar w$, and
\begin{equation}\label{up:eq:radius}
 r(D_T,0)^3=\frac{R^6}{\Delta_R\capa T}.
\end{equation}
Explicitly, $D_T$ is the disk $\abs{w} < R$ minus $\Gamma_R$ and minus the
six arcs $\omega^k\lambda_j$, $\omega^k\bar\lambda_j$ $(k=0,1,2)$ for
each $j$, where $\lambda_j$ is the lift of $\Phi_R^{-1}(\gamma_j)$ to the
sector $0 \le \arg w \le \pi/3$.
\end{enumerate}
\end{lemma}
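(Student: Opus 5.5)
For part (1), I will factor $\Phi_R$ as a Joukowski-type map. Write
\[
 R^6+1-t-\frac{R^6}{t}=R^6+1-R^3\Bigl(\frac{t}{R^3}+\frac{R^3}{t}\Bigr),
\]
and set $\tau=t/R^3$, so that $\abs\tau<1$. The map $J(\tau)=\tau+1/\tau$ sends the punctured disk $0<\abs\tau<1$ conformally onto $\C\setminus[-2,2]$. The unit circle goes onto $[-2,2]$ twice, covered once as the boundary of the disk image. The slit $[1,R^3]$, which in $\tau$ is $[R^{-3},1]$, maps onto $[2,R^3+R^{-3}]$. The slit $[-R^3,-8]$, which is $[-1,-8R^{-3}]$, maps onto $[-(R^3/8+8/R^3),-2]$.

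So the image of $A_R$ under $J$ is $\C$ minus the single segment $[-(R^3/8+8R^{-3}),\,R^3+R^{-3}]$. Since $J$ is real on real arguments, the three boundary pieces land consecutively on that segment. The affine map $x\mapsto (R^6+1-R^3x)/\Delta_R$ then sends this segment onto $[0,1]$. I will check the endpoints directly. At $x=R^3+R^{-3}$ the numerator vanishes. At $x=-(R^3/8+8R^{-3})$ the numerator is $R^6+1+R^6/8+8=\Delta_R$, which gives the value $1$. At $x=\pm2$ the numerator is $(R^3\mp1)^2$, which gives $m_\mp$.

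The remaining claims in (1) follow from this factorization. Symmetry under conjugation is clear from the real coefficients. The sign of $\Imag\Phi_R(t)$ follows because $-J$ maps the upper half of the punctured disk to the upper half-plane and the affine map has negative slope times $-1$; I will verify this with one test point. The expansion at $t=0$ is read off from the term $-R^6/t$.

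For part (2), I will pull back along $w\mapsto w^3$. Set $\Omega=\C\setminus T$. It contains $\infty$ and is simply connected in $\Chat$, because $T$ is a connected compact set: the arcs are attached to $[0,1]$ at the points $b_j$. The set $U=\Phi_R^{-1}(\Omega)\subset A_R$ is the punctured disk minus the two real slits and minus the arcs $\Phi_R^{-1}(\gamma_j\cup\bar\gamma_j)$. Adding the point $t=0$, which corresponds to $\infty\in\Omega$, gives a simply connected domain $U\cup\{0\}$, conformally equivalent to $\Omega$.

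Next I lift through $w\mapsto w^3$. The slit $[1,R^3]$ lifts to the rays $\omega^k[1,R]$, and $[-R^3,-8]$ lifts to $e^{i\pi/3}\omega^k[2,R]$. Together these give $\Gamma_R$. The arcs lift to the six copies of $\lambda_j$ and $\bar\lambda_j$. Simple connectivity of $D_T$ requires care, since cubing is a branched cover. I will argue that $D_T\setminus\{0\}$ is a $3$-fold unbranched cover of $U$. Near $0$ a small punctured disk is the full preimage of a small punctured neighbourhood, so the loop around $0$ generates the relevant deck group. Hence $D_T$ is the domain whose Riemann map to $\D$ is the cube root of the map from $U\cup\{0\}$. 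Concretely, if $g:\D\to U\cup\{0\}$ has $g(0)=0$ and $g'(0)=r^3$, then $G(z)=g(z^3)^{1/3}$ is a well-defined univalent map onto $D_T$ with $G'(0)=r$. This step also gives the invariance under $w\mapsto\omega w$ and $w\mapsto\bar w$.

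For \eqref{up:eq:radius}, I compose $g$ with $\Phi_R$. The map $\Phi_R\circ g$ sends $\D$ onto $\Omega$ with $0\mapsto\infty$, and near the origin it behaves like $-R^6/(\Delta_R r^3 z)$. The exterior map of $T$ has the form $\zeta\mapsto\capa(T)\,\zeta+\dots$. Comparing the two gives $\capa T=R^6/(\Delta_R r^3)$, which is the formula.

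I expect the main obstacle to be the topological bookkeeping in part (2). Two things need care: that the lifted arcs $\lambda_j$ stay in the sector $0\le\arg w\le\pi/3$ and are disjoint from $\Gamma_R$, and that the cube-root branch is globally well defined. For the first, I will use the sign statement $\Imag\Phi_R(t)\gtrless0$ from (1), which places the preimage of the upper half-plane in the upper half of $A_R$.
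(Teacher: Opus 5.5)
Your proposal is correct and follows the paper's proof. In part (1), the factorization $\Phi_R(t)=\bigl(R^6+1-R^3(\tau+\tau^{-1})\bigr)/\Delta_R$ with $\tau=t/R^3$ through the Joukowski map repackages the paper's observation that $\Phi_R$ identifies exactly $t$ and $R^6/t$. Part (2) matches the paper step for step: you pull back to $\{0\}\cup\Phi_R^{-1}(\Chat\setminus T)$, read off $r^3=R^6/(\Delta_R\capa T)$ from the residue at the pole, and pass to $D_T$ via $z\,(g(z^3)/z^3)^{1/3}$. The one point to tighten is that $\Chat\setminus T$ is a single domain because $T$ is a finite tree; connectedness and compactness of $T$ alone give simple connectivity of each complementary component but not that there is only one.
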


\begin{proof}
(1) Since $\Phi_R(t)-\Phi_R(t')=(t'-t)(1-R^6/(tt'))/\Delta_R$, the rational
function $\Phi_R$ of degree two identifies exactly the pairs $t$ and $R^6/t$.
The punctured disk $0<\abs t<R^3$ is a fundamental domain for this involution
apart from its boundary circle, so $\Phi_R$ maps it bijectively onto
$\C\setminus\Phi_R(\{\abs t=R^3\})$. On the circle,
$\Phi_R(R^3e^{i\phi})=(R^6+1-2R^3\cos\phi)/\Delta_R$ fills $[m_-,m_+]$. On the
real axis $\Phi_R'(t)=(R^6/t^2-1)/\Delta_R>0$ for $\abs t<R^3$, and
$\Phi_R(1)=0$, $\Phi_R(-8)=1$, so the two slits map onto $[0,m_-]$ and
$[m_+,1]$. The identity $\Imag\Phi_R(t)=\Imag t\,(R^6/\abs{t}^2-1)/\Delta_R$ and
the expansion at $0$ are immediate.

(2) Let $\Omega_T=\{0\}\cup(A_R\cap\Phi_R^{-1}(\C\setminus T))$. By (1),
$\Phi_R$ maps $\Omega_T\setminus\{0\}$ conformally onto $\C\setminus T$ and
sends $0$ to $\infty$. Since $T$ is a finite tree, $\Chat\setminus T$ is a
simply connected domain, and hence so is $\Omega_T$.
Let $g$ map $\Chat\setminus T$ onto $\Chat\setminus\overline\D$ with
$g(z)=z/\capa T+O(1)$ at $\infty$. Then $1/g\circ\Phi_R$ maps $\Omega_T$ onto
$\D$, fixes $0$, and has derivative of modulus $\Delta_R\capa T/R^6$ there;
hence $r(\Omega_T,0)=R^6/(\Delta_R\capa T)$. If $F$ maps $\D$ onto $\Omega_T$
with $F(0)=0$ and $F'(0)>0$, then $F(z)/z$ has no zeros, and
$f(z)=z\bigl(F(z^3)/z^3\bigr)^{1/3}$ maps $\D$ conformally onto
$D_T=\{w:w^3\in\Omega_T\}$ with $f'(0)=F'(0)^{1/3}$. This proves
\eqref{up:eq:radius}. The description of $D_T$ follows from (1), since
$w^3\in[1,R^3]$ or $w^3\in[-R^3,-8]$ exactly when $w\in\Gamma_R$.
\end{proof}

For $T=[0,1]$ this gives $r(D_{[0,1]},0)^3=4R^6/\Delta_R$, which increases to
$32/9$ as $R\to\infty$, the value for the plane minus the six Goodman rays. The
circle $\abs w=R$ costs very little, and the remaining task is to choose arcs
$\gamma_j$ that bring the inradius down to $1$ while increasing $\capa T$ as
little as possible.

\subsection{A capacity bound for almost critical arcs}

For tips $a_j=x_j+iy_j$ $(y_j>0)$ and bases $b_j\in(0,1)$ put
\[
 P(z)=\prod_{j=1}^n(z-b_j),\qquad
 A(z)=z(z-1)\prod_{j=1}^n(z-a_j)(z-\bar a_j),\qquad
 q=\frac P{\sqrt A},
\]
and
\[
 p(s)=\prod_j(1-b_js),\qquad a(s)=\prod_j\bigl((1-x_js)^2+y_j^2s^2\bigr),
 \qquad N(s)=\frac{p(s)^2-a(s)}s .
\]
Since $p(0)=a(0)=1$, $N$ is a polynomial. If the arcs $\gamma_j$ from $b_j$ to
$a_j$ are exactly critical vertical trajectories of $q(z)^2dz^2$, so that
$\Real\int q\,dz$ is constant along them, then the Green function
of $\Chat\setminus T$ with pole at infinity is $\Real\int_1^zq$. The bases $b_j$
are then determined by $n$ conditions that are linear in the coefficients of
$P$, which is how the arcs were found, see Section~\ref{up:sec:search}. The
following lemma requires no such exactness.

\begin{lemma}\label{up:lem:cap}
Let $T$ be as in Lemma~\ref{up:lem:quotient}(2), where $\gamma_j$ is a
rectifiable arc from $b_j$ to $a_j$, and the points $0,1,b_j,a_j$ are
distinct. Choose the branch of $q$ on $\C\setminus T$ that is positive on
$(1,\infty)$, and let $\varepsilon_j \ge \int_{\gamma_j}\abs{\Real(q\,dz)}$.
Then
\[
 \log\capa T \le -\log4-J+2\sum_{j=1}^n\varepsilon_j,\qquad
 J=2\int_0^1\frac{N(s)}{\sqrt{a(s)}\,\bigl(p(s)+\sqrt{a(s)}\bigr)}\,du,
 \quad s=1-u^2 .
\]
\end{lemma}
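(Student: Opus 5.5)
The plan is to compare the Green function $g_T$ of $\Chat\setminus T$ with pole at infinity with the explicit harmonic function
\[
 U(z)=\Real\int_1^z q(\zeta)\,d\zeta ,\qquad z\in\C\setminus T .
\]
This function plays the role of the Green function in the exactly critical case. We show that $U$ is well defined and nearly vanishes on $T$, and we compute its Robin-type constant at infinity exactly. The maximum principle then turns the defect on $T$ into the $\varepsilon_j$ terms.

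\emph{Step 1: $U$ is well defined.}
The branch points $0,1,a_j,\bar a_j$ of $\sqrt A$ all lie on the connected compact set $T$. Hence $\sqrt A$, and therefore $q$, has a single-valued branch on $\C\setminus T$, normalized to be positive on $(1,\infty)$. Since $\deg P=n$ and $\deg A=2n+2$, we have $q(z)=1/z+O(z^{-2})$ at infinity. The set $\C\setminus T$ is an annulus-type domain whose only homology class is a loop around $T$. The period of $q\,dz$ along such a loop is $\pm2\pi i$, which has zero real part. So $U$ is single-valued and harmonic, and $U(z)=\log\abs z+c_U+o(1)$ as $z\to\infty$.

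\emph{Step 2: boundary values of $U$ on $T$.}
For $0<x<1$ we have $x(x-1)<0$ and $\prod_j\abs{x-a_j}^2>0$. Thus $A(x)<0$, and $q$ is purely imaginary on both edges of $[0,1]$. Therefore $\Real(q\,dz)=0$ along $[0,1]$, and $U\equiv U(1)=0$ there, on both edges and including the bases $b_j$. Along $\gamma_j$ we integrate from $b_j$ and obtain $\abs U\le\int_{\gamma_j}\abs{\Real(q\,dz)}\le\varepsilon_j$. The symmetry $q(\bar z)=\overline{q(z)}$ gives the same bound on $\bar\gamma_j$. The two edges of a slit may give different boundary values, but each is bounded by the corresponding integral. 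Taking $\varepsilon_j$ to cover both edges, up to the factor $2$ in the statement, we get $U\le2\sum_j\varepsilon_j$ on $T$, in the sense of boundary limits. Near the endpoints $U$ stays bounded, because $q$ has only integrable singularities of type $(z-z_0)^{-1/2}$.

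\emph{Step 3: maximum principle.}
We have $g_T=\log\abs z-\log\capa T+o(1)$ at infinity, and $g_T=0$ on $T$ since $T$ is a continuum and hence regular. The difference $U-g_T$ is bounded and harmonic on $\Chat\setminus T$, including at $\infty$. So it is at most $2\sum\varepsilon_j$ everywhere. Evaluating at infinity gives
\[
 c_U+\log\capa T\le 2\sum_j\varepsilon_j .
\]

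\emph{Step 4: computing $c_U$.}
For real $x>1$ we substitute $s=1/x$. Then $P(x)=x^np(s)$ and $A(x)=x^{2n+2}(1-s)a(s)$, so
\[
 q\,dx=-\frac{p(s)}{\sqrt{a(s)}\sqrt{1-s}}\,\frac{ds}{s}.
\]
The identity $p-\sqrt a=sN/(p+\sqrt a)$ gives
\[
 \frac{p}{\sqrt a}=1+\frac{sN}{\sqrt a\,(p+\sqrt a)} .
\]
We now split the integral. The first part is
\[
 \int_{1/X}^1\frac{ds}{s\sqrt{1-s}}-\log X\;\longrightarrow\;\log 4 ,
\]
using the substitution $s=1-u^2$ and $\int_0^1\frac{2\,du}{1-u^2}$ regularized. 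The remaining part converges absolutely to
\[
 \int_0^1\frac{N(s)}{\sqrt{a(s)}\,(p(s)+\sqrt{a(s)})}\frac{ds}{\sqrt{1-s}}=J ,
\]
since $ds/\sqrt{1-s}=2\,du$. Hence $c_U=\log4+J$, and Step 3 gives the lemma.

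The main obstacle is bookkeeping rather than ideas. First, one must justify boundary behavior and the maximum principle at the endpoints and on the two edges of each slit. Second, one must confirm that $\sqrt{a(s)}$ has no zeros on $[0,1]$ and that $p+\sqrt a\neq0$ there, so that $J$ is a proper integral. This holds because $y_j>0$ makes $a(s)>0$, and $p(s)>0$ for $b_j\in(0,1)$.
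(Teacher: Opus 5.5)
\textbf{Gap in Step~2.} Your claim that $U\equiv U(1)=0$ on both edges of $[0,1]$, \emph{including across the bases $b_j$}, is false unless the arcs are (essentially) critical. From $\Real(q\,dz)=0$ on $(0,1)\setminus\{b_j\}$ you only get that $U$ is constant on each edge of each component of $(0,1)\setminus\{b_j\}$.

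On the boundary of $\C\setminus T$, the upper edge just to the right of $b_j$ is not adjacent to the upper edge just to the left of $b_j$. They are joined only by going up one side of $\gamma_j$ to $a_j$ and down the other side. Since $a_j$ is a simple branch point, the boundary values of $q$ on the two sides of $\gamma_j$ satisfy $q_{\mathrm{left}}=-q_{\mathrm{right}}$. Hence the boundary value of $U$ on the real axis jumps at $b_j$ by $2\Real\int_{\gamma_j}q_{\mathrm{right}}\,dz$, which is nonzero in general, and these jumps accumulate as you move from $1$ toward $0$.

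Consequently your bound $\abs U\le\varepsilon_j$ on $\gamma_j$, obtained by integrating from $b_j$ with $U(b_j)=0$, is unjustified. The factor $2$ and the sum over $j$ are then inserted by fiat (``up to the factor $2$ in the statement''). Indeed, if your Step~2 were correct it would yield the much stronger bound $\abs U\le\max_j\varepsilon_j$ on $T$. That bound is too strong in general, because the jumps at different bases can add.

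\textbf{The fix.} Replace Step~2 by a total-variation argument along the boundary. Start at $1$, where $U=0$, and trace the upper side of $T$ toward $0$, going up one side and down the other side of each $\gamma_j$. On the real pieces $U$ is constant. Along each traversal of a side of $\gamma_j$ it changes by at most $\int_{\gamma_j}\abs{\Real(q\,dz)}\le\varepsilon_j$; this is the same on both sides, since $q$ only changes sign. Each arc is traversed twice, so $\abs U\le 2\sum_j\varepsilon_j$ at every boundary point reached from the upper half-plane. The symmetry $U(\bar z)=U(z)$, which follows from $q(\bar z)=\overline{q(z)}$ and the real base point $1$, gives the same bound on the lower side. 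This is where the $2\sum_j\varepsilon_j$ genuinely comes from.

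With this replacement, your Steps~1, 3 and~4 go through as written and agree with the paper's proof: single-valuedness, the maximum principle against $g_T$, and the computation $c_U=\log4+J$ via $s=1/x$, $s=1-u^2$. Your check that $a(s)>0$ and $p(s)>0$ on $[0,1]$ is a useful detail the paper leaves implicit.
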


\begin{proof}
All zeros of $A$ lie on the continuum $T$. A closed curve in $\C\setminus T$
therefore encloses either none of them or all $2n+2$ of them, so $q$ has a
single-valued branch on $\C\setminus T$. Since $q(z)=1/z+O(z^{-2})$ at
infinity, $h(z)=\Real\int_1^zq$ is single-valued and harmonic on
$\C\setminus T$, continuous up to both sides of $T$, and $h(1)=0$.

On $(0,1)\setminus\{b_j\}$ we have $A<0 \le P^2$, so $q$ is purely
imaginary and $h$ is constant along each side of every subinterval. By
conjugation symmetry $h(\bar z)=h(z)$. Trace the upper side of $T$ from $1$
towards $0$, going up one side of each $\gamma_j$ and down the other. The
total variation of $h$ along this path is at most $2\sum_j\varepsilon_j$, so
$\abs h \le 2\sum_j\varepsilon_j$ at every boundary point of $\C\setminus T$.

For real $x>1$ we have $q(x)>0$ and
$\int_1^x(t(t-1))^{-1/2}dt=\log x+\log4+o(1)$, so
\[
 h(x)=\log x+\log4+\int_1^\infty\Bigl(q(t)-\frac1{\sqrt{t(t-1)}}\Bigr)dt+o(1).
\]
The substitution $t=1/(1-u^2)$ turns the last integral into $J$, using
$p/\sqrt a-1=sN/(\sqrt a(p+\sqrt a))$. Since $h-\log\abs z$ is harmonic at
infinity, $h(z)=\log\abs z+\log4+J+o(1)$ in every direction. The function
$h-g_T$, where $g_T(z)=\log\abs z-\log\capa T+o(1)$ is the Green function,
is bounded and harmonic on $\Chat\setminus T$, takes the value
$\log4+J+\log\capa T$ at infinity, and has boundary values at most
$2\sum_j\varepsilon_j$. The maximum principle gives the claim.
\end{proof}

When the arcs are given by polynomial parametrizations, the variation
integrals can be desingularized. Write the arc as $z(\tau)$,
$0 \le \tau \le 1$, with $z(0)=b_j$ and $z(1)=a_j$, and put $\tau=1-\sigma^2$.
Define the polynomials $L_j,M_j$ by $z(\tau)-a_j=(\tau-1)L_j(\tau)$ and
$z(\tau)-b_j=\tau M_j(\tau)$, and let $\widetilde P_j$, $\widetilde A_j$
be $P$ and $\prod_k(z-a_k)(z-\bar a_k)$ with the $j$-th factors removed.
Direct cancellation gives
\begin{equation}\label{up:eq:V}
 \Bigl(q\frac{dz}{d\sigma}\Bigr)^2=\tau^2V_j(\sigma),\qquad
 -V_j=\frac{4M_j^2\widetilde P_j(z)^2\,z'(\tau)^2}
 {z(z-1)L_j(\tau)(z-\bar a_j)\widetilde A_j(z)},
\end{equation}
where $z=z(\tau)$. For the curves used below, $z(\tau)=X_j(\tau)+iy_j\tau$
with $y_j>0$, we have $z'(\tau)\ne0$, and $V_j$ is analytic and nonzero at both
ends of the arc. Whenever $\Real(-V_j)>0$ we have
$\abs{\Real(q\,dz/d\sigma)}=\tau\abs{\Imag\sqrt{-V_j}}$ for either square
root, so
\begin{equation}\label{up:eq:eps}
 \int_{\gamma_j}\abs{\Real(q\,dz)}=\int_0^1\tau\,\abs[\big]{\Imag\sqrt{-V_j(\sigma)}}\,d\sigma .
\end{equation}

\subsection{The explicit domain}
Let $R=4066667/665901=6.10701\ldots$. For $j=1,\dots,5$, let
\[
 z_j(\tau)=X_j(\tau)+iy_j\tau,\qquad0 \le \tau \le 1,
\]
where $X_j$ is a polynomial of degree $28$ with exact rational Bernstein
coefficients and $y_j>0$ is rational. The exact data are in the
supplementary archive \cite{WikArchive}. The base is $b_j=X_j(0)$ and the
tip is $a_j=X_j(1)+iy_j$. Table~\ref{up:tab:arcs} lists the rounded values. The
circle $\abs w=R$ corresponds to $[m_-,m_+]=[0.880965\ldots,0.896573\ldots]$,
so two arcs end on the first slit, one on the second slit, and two on the
circle. Let $T$ be the resulting continuum and $D=D_T$ as in
Lemma~\ref{up:lem:quotient}. Thus $D$ is the disk $\abs w<R$ with the six
Goodman slits and thirty further analytic arcs removed.
Figure~\ref{up:fig:domain} shows $D$, drawn from the certified data.

\begin{figure}[ht]
\centering
\includegraphics[width=\textwidth]{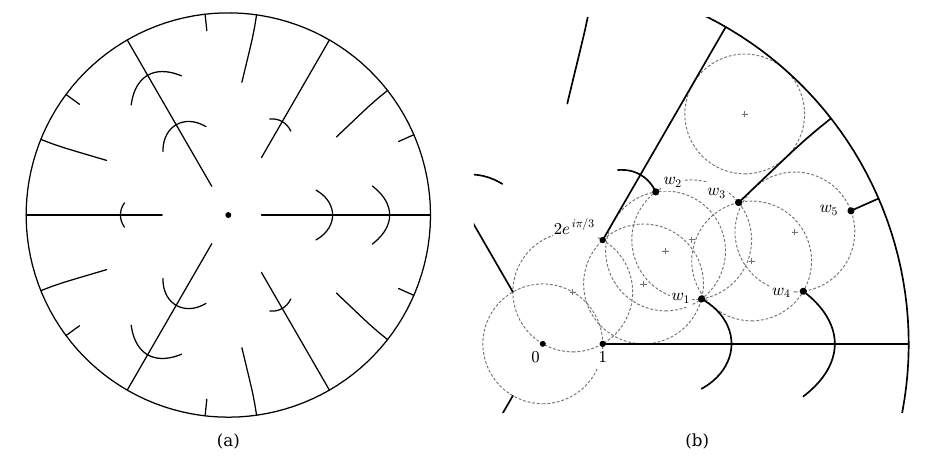}
\caption{(a) The domain $D$ of Theorem~\ref{thm:upper}: the disk
$\abs w<R$, $R=6.10701\ldots$, minus the six Goodman slits and thirty
further arcs. (b) The sector $0 \le \arg w \le \pi/3$ enlarged, with the tips
$w_1,\dots,w_5$ and eight numerically located, locally maximal disks of radius
approximately $1$ (dashed). Their centres are approximately $0$, $e^{i\pi/3}$, $1.6813+i$, $2.0495+1.5499i$,
$2.4852+1.7363i$, $3.4857+1.3836i$, $4.2077+1.8676i$ and $3.3701+3.8372i$.
The disk at $1.6813+i$ is tangent to $[1,R]$ and passes through
$2e^{i\pi/3}$.}
\label{up:fig:domain}
\end{figure}

\begin{table}[ht]
\caption{The five arcs, rounded: bases $b_j$ and tips $a_j=\Phi_R(w_j^3)$
in the quotient plane, the tips $w_j$ in the sector, and the point where
the lifted arc meets $\Gamma_R$ or the circle. Exact values are in
the archive \cite{WikArchive}.}
\label{up:tab:arcs}
\centering
\small
\begin{tabular}{cllll}
\toprule
$j$&$b_j$&$a_j$&$w_j$&$w$-plane base\\
\midrule
1&$0.8597948415$&$0.8597152018+0.0310768232i$&$2.6495+0.7498i$&$3.1499\in[1,R]$\\
2&$0.9150378101$&$0.9157589670+0.0093883090i$&$1.8853+2.5363i$&$3.2590\,e^{i\pi/3}$\\
3&$0.8919556962$&$0.8931896978+0.0118790690i$&$3.2673+2.3594i$&circle\\
4&$0.8791102560$&$0.8791140224+0.0048863808i$&$4.3482+0.8775i$&$4.8742\in[1,R]$\\
5&$0.8861278414$&$0.8860162233+0.0019199143i$&$5.1434+2.2204i$&circle\\
\bottomrule
\end{tabular}
\end{table}

\begin{theorem}\label{thm:upper}
For this domain, $\rho(D) \le 1+10^{-8}$ and
\[
 \frac{\rho(D)}{r(D,0)}<0.656268014.
\]
Consequently $\U<0.6562681$; this is Theorem~\ref{thm:upper-intro}.
\end{theorem}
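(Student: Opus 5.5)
The proof has two independent parts: an upper bound for the conformal radius denominator $r(D,0)$, and an upper bound for the inradius $\rho(D)$. Dividing them then gives the ratio.

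For the conformal radius, we use \eqref{up:eq:radius}:
\[
 r(D,0)^3=\frac{R^6}{\Delta_R\capa T}.
\]
A lower bound for $r(D,0)$ therefore needs an upper bound for $\capa T$, and Lemma~\ref{up:lem:cap} supplies one. The steps are as follows.
\begin{itemize}
\item From the exact Bernstein data, form the polynomials $p$, $a$ and $N$, and enclose
\[
 J=2\int_0^1\frac{N(s)}{\sqrt{a(s)}\bigl(p(s)+\sqrt{a(s)}\bigr)}\,du,\qquad s=1-u^2,
\]
in Arb using Taylor models on a subdivision of $[0,1]$. The integrand is analytic, since $a(s)>0$ on $[0,1]$ and $p+\sqrt a>0$, which we check on the same boxes.
\item For each $j$, bound the variation $\varepsilon_j$ through \eqref{up:eq:eps}. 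On each subinterval, certify $\Real(-V_j)>0$ so that the square root is well defined and analytic, then bound $\tau\abs{\Imag\sqrt{-V_j(\sigma)}}$ from above.
\item Before applying the lemma, verify its hypotheses. The five lifted arcs must be pairwise disjoint, meet $\mathbb R$ only at $b_j$ (this holds because $\Imag z_j(\tau)=y_j\tau>0$ for $\tau>0$), and have distinct endpoints. Disjointness reduces to interval separation of the curves.
\end{itemize}
Because the arcs are almost critical, the $\varepsilon_j$ should be tiny, roughly $10^{-9}$. The result is a certified value $r(D,0)>r_0$.

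For the inradius, we show that every disk of radius $1+10^{-8}$ meets $\partial D$, which consists of $\Gamma_R$, the thirty arcs and the circle. By the symmetry of $D$ under $w\mapsto\omega w$ and $w\mapsto\bar w$, it suffices to take centers $c$ in the sector $0\le\arg w\le\pi/3$. For each such $c$ we need $\dist(c,\partial D)\le 1+10^{-8}$.

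To cover the sector, we use a finite cover of $\{0\le\arg c\le\pi/3,\ \abs c\le R\}$ by boxes. For each box we exhibit a boundary point, or a certified enclosure of a point on a boundary curve, whose distance to every point of the box is at most $1+10^{-8}$. Away from the eight locally maximal disks, a single nearby boundary point often suffices, because the local maximum of the distance there is well below $1$.

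The main obstacle is the neighbourhoods of the eight near-critical centers, where the distance function is within about $10^{-8}$ of its maximum. There a single witness point fails, and boxes would have to be impossibly small. Instead, I would use the fact that $\dist(c,\partial D)$ is locally the minimum of two or three smooth functions $\dist(c,\gamma)$ for the curves touching the extremal disk. On a small box around the numerical maximizer, I would bound the max-min through a second-order Taylor enclosure of each curve distance. Concretely, I would compute certified nearest-point parameters on each curve by interval Newton, then show that the minimum of the resulting quadratic upper models is at most $1+10^{-8}$. This amounts to a small linear-programming or convexity argument using the gradients of the two or three active distances, which positively span the plane.

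Finally, we combine the two parts and check in interval arithmetic that $(1+10^{-8})/r_0<0.656268014$. Then \eqref{up:eq:reform} gives $\U<0.6562681$.
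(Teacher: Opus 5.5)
Your proposal is correct, but for the inradius it takes a different route from the paper, and the reason you give for that route is mistaken.

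\textbf{Capacity half.} This is essentially the paper's argument: Taylor-model enclosures in Arb of $J$ and of each $\varepsilon_j$, with $\Real(-V_j)>0$ checked on every ball. The hypotheses of Lemma~\ref{up:lem:quotient}(2), including $0<b_j<1$, concern the quotient-plane arcs $\gamma_j$ rather than their lifts. The paper certifies their disjointness by exact Sturm counts on the rational difference of real parts at equal height, since each arc is a graph over the imaginary axis. Interval separation would also work.

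\textbf{Inradius half.} A single witness point per box does not fail near the critical centres. The paper accepts a dyadic box when all its corners lie within $\rho_0=1+10^{-8}$ of one Goodman slit, or of one sampled point $\Phi_R^{-1}(z_j(k/16384))$ (lifted and reflected). Convexity of the distance makes the corners sufficient, and any box not accepted is bisected.

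Your own observation, that the active gradients positively span the plane, is exactly why this works. A box at distance $r$ from a critical centre is accepted by a sample point near the contact towards which it lies, as soon as its size is a fixed fraction of $r$. So only logarithmically many boxes are needed per centre, and the innermost has diameter of the order of the slack $10^{-8}$. This is not impossibly small under adaptive bisection: the actual run used $2243$ boxes with maximal depth $58$.

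The real delicacy of that scheme lies elsewhere. Where an extremal disk touches an arc at an interior point, the nearest sample point costs about $\delta^2/8$ for sample spacing $\delta$. That is why $16384$ samples per arc were needed; $4096$ failed near $3.370+3.837i$.

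\textbf{Comparison.} Your local analysis uses nearest points by interval Newton, quadratic upper models, and a max--min argument with the spanning gradients. It would remove the sampling loss and allow far fewer boundary samples. The cost is more machinery: identifying the active curves on each critical box, and controlling the max--min of the models there. Certifying nearest points would also require the tangencies to be nondegenerate. That certification is unnecessary for an upper bound, since any point of a curve bounds the distance from above. The paper's check needs only distances to points and segments, and no local analysis at all.
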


The proof is a finite verification of four statements, described next.
All acceptance tests use Arb ball arithmetic \cite{Arb} at $192$-bit
precision, through \texttt{python-flint}~0.9.0. Floating-point computation
is used only to propose a witness point for a box, never to accept one.

\smallskip\noindent\emph{Exact checks.} The bases are distinct and lie in $(0,1)$, and $y_j>0$. Hence each arc meets
$\mathbb R$ only at $b_j$ and is a graph over the imaginary axis. For each
pair of arcs, the difference of their real parts at equal height is a
rational polynomial, and an exact Sturm count shows that it has no zero on
the common range. So the arcs are pairwise disjoint and
Lemma~\ref{up:lem:quotient} applies.

\smallskip\noindent\emph{Capacity.} The integrands of $J$ and of \eqref{up:eq:eps} are built from exact rational
polynomials in $u$ and in $\sigma$. The interval $[0,1]$ is divided into
$512$ subintervals for $J$ and $256$ for each $\varepsilon_j$. On a
subinterval with dyadic center $m$ and half-width $h$, every polynomial is
first shifted exactly to $m$. A power series evaluation at $m$ gives the
Taylor coefficients of order $<K$, with $K=10$ for $J$ and $K=8$ for
$\varepsilon_j$. The same evaluation on the ball $m\pm h$ encloses the
coefficient of order $K$ on the whole subinterval, which bounds the
integral-form remainder. Shifting first avoids the dependency blow-up of
evaluating expanded coefficients on a ball. On every ball the verifier also
checks $\Real(-V_j)>0$, so the principal square root in \eqref{up:eq:eps} is
analytic there. The integrand $\psi=\tau\Imag\sqrt{-V_j}$ of $\varepsilon_j$ may
change sign, so its absolute value is never expanded. Instead, if $g_k$ are the
Taylor coefficients of $\tau\sqrt{-V_j}$ at $m$ and $M$ bounds the coefficient
of order $K$ on the ball, the verifier uses
\[
 \int_{-h}^{h}\abs{\psi(m+v)}\,dv\le\sum_{k<K}\abs{\Imag g_k}\frac{2h^{k+1}}{k+1}
 +\frac{2Mh^{K+1}}{K+1}.
\] The results are
\[
 -0.0047990424887803<J<-0.0047990424887802,\qquad
 2\sum_{j=1}^5\varepsilon_j<1.1\cdot10^{-9},
\]
with $\varepsilon_1<5.1\cdot10^{-10}$, $\varepsilon_3<1.9\cdot10^{-11}$, and
the other three below $10^{-12}$.

\smallskip\noindent\emph{Inradius.} If $B(c,r)\subset D$ with $r>\rho_0=1+10^{-8}$, then $\abs c<R-\rho_0$, and
by symmetry we may take $0 \le \arg c \le \pi/3$. The verifier covers this region
by dyadic boxes. A box is accepted in one of three ways: it lies outside the
region; all of its corners are within $\rho_0$ of one Goodman slit; or all
of its corners are within $\rho_0$ of one boundary point. Distance to a
point or to a segment is convex, so checking the corners suffices. The
boundary points are Arb enclosures of $\Phi_R^{-1}(z_j(k/16384))$ for
$1 \le k \le 16384$, lifted to the sector and reflected in the lines
$\arg w=0$ and $\arg w=\pi/3$. The inverse uses only a square root and the
principal cube root of $-it$, and the correct root is the one in the upper
half-plane. A box that is not accepted is bisected. Exhausting the stack
proves $\rho(D) \le \rho_0$. Several extremal disks touch an arc at an
interior point. There a sample spacing $\delta$ costs about $\delta^2/8$,
and $4096$ samples per arc were not enough to close the disk centred near
$3.370+3.837i$.

\smallskip\noindent\emph{Conclusion.} By \eqref{up:eq:radius} and Lemma~\ref{up:lem:cap},
\[
 \Bigl(\frac{\rho(D)}{r(D,0)}\Bigr)^3 \le \rho_0^3\,\frac{\Delta_R}{4R^6}\,
 e^{-J+2\sum_j\varepsilon_j},
\]
and evaluating the right side in ball arithmetic gives
$\rho(D)/r(D,0) \le 0.6562680136$. Now \eqref{up:eq:reform} gives
Theorem~\ref{thm:upper}.

The verification returned \texttt{VERIFIED} in about $76$ seconds on one
core. The inradius check visited $2243$ boxes, $1122$ of them terminal,
with maximal depth $58$. The verifier, the exact input data and the run
record, with hashes of the input and of the verifier, are in the archive
\cite{WikArchive}. The same verifier also gives $\U<0.6562745$ for a
simpler domain with three arcs per sector and $R=1843326/377609$, and it
confirms $\rho/r<0.65630373$ for the best two-arc configuration found in
Section~\ref{up:sec:search}.

\subsection{How the arcs were found}\label{up:sec:search}
\emph{This subsection describes the floating-point search. Nothing in it is
used in the proof of Theorem~\ref{thm:upper}.}

For prescribed tips $w_j$ in the sector, put $a_j=\Phi_R(w_j^3)$ and
require the arcs to be critical vertical trajectories of $q^2dz^2$ landing on
$[0,1]$. The conditions $\Real\int_{\Real a_j}^{a_j}q=0$ form an $n\times n$
linear system for the coefficients of $P$. We then maximize $\log r(D,0)$
over $R$ and the tips, subject to the constraint that every local maximum
of $\dist(\cdot,\partial D)$ is at most $1$. The same construction
reproduces the published values $0.6563936131$ \cite{CarrollOrtegaCerda}
and $0.6563192773$ \cite{OrtegaCerdaPridhnani} to about $10^{-12}$.
Table~\ref{up:tab:family} lists the best ratios found with $n$ arcs per
sector. The optimizer and the further experiments mentioned below are
described in the archive \cite{WikArchive}.

\begin{table}[ht]
\caption{Best ratios $\rho(D)/r(D,0)$ found numerically with $n$ arcs per
sector. The value for $n=5$ is certified in Theorem~\ref{thm:upper}.}
\label{up:tab:family}
\centering
\begin{tabular}{clll}
\toprule
$n$&$R$&ratio&remark\\
\midrule
$1$&$4.0546$&$0.6563936$&Carroll--Ortega-Cerd\`a \cite{CarrollOrtegaCerda}\\
$2$&$5.1195$&$0.6563193$&Ortega-Cerd\`a--Pridhnani \cite{OrtegaCerdaPridhnani}\\
$2$&$4.5956$&$0.6563037$&both tips free\\
$3$&$4.8816$&$0.6562744$&\\
$4$&$5.9036$&$0.6562682$&\\
$5$&$6.1070$&$0.6562680$&\\
\bottomrule
\end{tabular}
\end{table}

The gains from additional arcs have become small: going from three to four
arcs gains $6\cdot10^{-6}$, and going from four to five only
$2\cdot10^{-7}$. The following proposition shows that every Goodman-type
domain is limited by its first two generations.

\begin{proposition}\label{up:prop:core}
If $\Omega\ni0$ is simply connected and $\C\setminus\Omega$ contains the six
points $\omega^k$ and $-2\omega^k$, then $r(\Omega,0) \le (32/9)^{1/3}$. If
moreover $\D\subset\Omega$, then $\rho(\Omega)/r(\Omega,0) \ge (9/32)^{1/3}
=0.6551853\ldots$.
\end{proposition}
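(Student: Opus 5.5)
The plan is to turn the first assertion into a capacity minimization and solve it with the Pólya--Chebotarev theorem; the second assertion then follows at once.

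\emph{Reduction to capacity.} Let $\Omega\ni0$ be simply connected and omit the six points $\omega^k$ and $-2\omega^k$. Apply the inversion $\iota(w)=1/w$ and put $K=\iota(\Chat\setminus\Omega)$. Since $\Omega$ is simply connected in $\Chat$ and $\infty\notin\Omega$, the set $K$ is a continuum. It contains $0=\iota(\infty)$ and the six points $\omega^{-k}$ and $-\tfrac12\omega^{-k}$, and $\iota(\Omega)=\Chat\setminus K$ contains $\infty$. If $F$ maps $\D$ onto $\Omega$ with $F(0)=0$, then $1/F(z)=1/(F'(0)z)+O(1)$ near $0$. Comparing with the exterior map of $K$ gives $r(\Omega,0)=1/\capa K$. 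So the first assertion is equivalent to
\[
 \capa K\ge(9/32)^{1/3}
\]
for every continuum $K$ containing the seven points $0,\ \omega^{k},\ -\tfrac12\omega^{k}$.

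\emph{Extremal continuum.} By the Pólya--Chebotarev theorem (Grötzsch, Lavrentiev), among continua containing a given finite set there is a unique one of minimal capacity $K^*$. Uniqueness forces $K^*$ to be invariant under every symmetry of the seven-point set, here $w\mapsto\omega w$ and $w\mapsto\bar w$. I then identify $K^*$ with the tree $T_0=\bigcup_k\omega^k\bigl([0,1]\cup[-\tfrac12,0]\bigr)$, which is the image under $\iota$ of the complement of the plane slit along the six Goodman rays $\omega^k[1,\infty)$ and $e^{i\pi/3}\omega^k[2,\infty)$.

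To justify $K^*=T_0$, I use the known structure of the extremal continuum: it is a union of critical trajectories of a quadratic differential $-\frac{(w-\text{branch points})}{\prod(w-p_j)}dw^2$. Symmetry confines its zeros to the fixed lines of the reflections. The only symmetric tree joining the seven points along such lines is the star $T_0$. An alternative, if that is awkward, is to check directly that $T_0$ satisfies the $S$-property (equal normal derivatives of the Green function on both sides), which characterizes the minimizer.

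\emph{Computing the constant and the second assertion.} The capacity of $T_0$ comes from Lemma~\ref{up:lem:quotient} with $T=[0,1]$: $r(D_{[0,1]},0)^3=4R^6/\Delta_R\to32/9$ as $R\to\infty$. Since the domains $D_{[0,1]}$ increase to the plane minus the six rays, Carathéodory convergence gives $r^3=32/9$ for that domain. Hence $\capa T_0=(9/32)^{1/3}$, which proves the first assertion. For the second, $\D\subset\Omega$ gives $\rho(\Omega)\ge1$, so $\rho(\Omega)/r(\Omega,0)\ge(9/32)^{1/3}$.

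The main obstacle is the identification $K^*=T_0$, that is, ruling out other symmetric trees such as ones with Steiner-type branch points off the origin. I would first try the $S$-property verification on $T_0$. Via the cube map $w\mapsto w^3$ this reduces to the segment $[-\tfrac18,1]$ with the point $0$ on it, where the Green function is explicit.
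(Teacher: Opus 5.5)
Your reduction matches the paper's: inversion to a continuum $K\supset Q=\{0,\omega^k,-\tfrac12\omega^k\}$ with $r(\Omega,0)=1/\capa K$. So do the appeal to the unique Gr\"otzsch--Lavrentiev minimizer $K^*$, its invariance under $w\mapsto\omega w$ and $w\mapsto\bar w$, and the final step $\rho(\Omega)\ge1$. The gap is the step you flag yourself: $K^*=T_0$ is asserted, not proved.

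In your first route, knowing where the zeros of the quadratic differential are says nothing about where the critical trajectories run. (That the zeros lie on the symmetry lines does hold here, but only by a degree count: at most five zeros, while a generic $D_3$-orbit has six points.) The claim that the star is ``the only symmetric tree'' is exactly what must be shown. For instance, you have not ruled out three simple zeros on the symmetry lines acting as branch points away from the origin.

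Your second route checks only the $S$-property of $T_0$, which is easy via $w\mapsto w^3$. Turning that into minimality needs a sufficiency theorem: a continuum through $Q$ made of critical trajectories with the $S$-property is the minimizer, via Jenkins' General Coefficient Theorem or Stahl's theory. That theorem is true but much deeper than necessary, and you neither state it precisely nor cite it.

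The missing idea is that you only need $T_0\subseteq K^*$, and this follows from the tree structure alone. $K^*$ contains a unique arc $\gamma$ from $-\tfrac12$ to $1$. Conjugation preserves $K^*$ and fixes both endpoints, so it maps $\gamma$ onto itself. An involution $h$ of an arc fixing both endpoints is increasing, hence the identity: if $h(x)>x$ then $x=h(h(x))>h(x)>x$. Hence $\gamma\subset\mathbb R$, so $\gamma=[-\tfrac12,1]$, and rotating gives $T_0\subseteq K^*$. Since $T_0$ itself contains $Q$, minimality and monotonicity of capacity give $\capa K^*=\capa T_0$; uniqueness then yields $K^*=T_0$ for free. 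This is the paper's argument.

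For the constant, your Carath\'eodory limit through Lemma~\ref{up:lem:quotient} is valid. It is quicker to note that $T_0$ is the full preimage of $[-\tfrac18,1]$ under $w\mapsto w^3$, so $\capa T_0=(\capa[-\tfrac18,1])^{1/3}=(9/32)^{1/3}$.
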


\begin{proof}
Put $Q=\{0,\omega^k,-\omega^k/2\}$, the inverses of the six points together with
$0$, and let $\kappa$ be the minimal capacity of a continuum containing $Q$. The
minimum is attained by a unique continuum $F$, a finite tree of analytic arcs
(Gr\"otzsch, Lavrentiev; see \cite{Dubinin}). Under $\zeta=1/z$ the domain
$\Omega$ maps onto the unbounded component of the complement of the continuum
$\{1/z:z\in\Chat\setminus\Omega\}$, which contains $Q$. Hence $r(\Omega,0) \le
1/\kappa$. By uniqueness, $F$ is invariant under conjugation and under rotation
by $\omega$. Since $F$ is a tree, it contains a unique arc joining $-1/2$ and
$1$. Conjugation maps this arc to itself and fixes its endpoints, so it fixes
the arc pointwise. Hence $[-1/2,1]\subset F$. By rotation, $F$ contains the
six-armed star~$X$, and~$X$ itself contains $Q$. Hence $\kappa=\capa X$. Since
$X$ is the full preimage of $[-1/8,1]$ under $z\mapsto z^3$, we get $\capa
X=(9/32)^{1/3}$.
\end{proof}

Theorem~\ref{thm:upper} is within $1.1\cdot10^{-3}$ of this floor. In our
searches, moving the starting points $1$ and $2$ of the Goodman slits made
the ratio larger.

The proposition does not cover one modification. The second-generation tip
need not lie on the bisector: any point $e^{i\pi/3}+e^{i\theta}$ with
$0<\theta<2\pi/3$ blocks the disk centered at $e^{i\pi/3}$. Numerically,
the resulting rotationally symmetric cores have P\'olya--Chebotarev values
below $(9/32)^{1/3}$, for example about $0.6534$ at $\theta=\pi/6$. In
their extremal continua each first-generation slit merges with an adjacent
off-axis slit, which leaves a wide sector open, and in our attempts filling
these sectors cost more than the core gains. We have not searched such
chiral configurations systematically, so whether one of them beats
Theorem~\ref{thm:upper} remains open.

\section{Further directions}\label{sec:status}

For the lower bound, the explicit obstruction in
Section~\ref{sec:certificate} limits further improvements with the present
center, and Proposition~\ref{prop:centered} rules out disks centered at
$f(0)$ beyond $0.6$. It remains open whether $\inf_{f\in\B}d_f\ge0.592$,
that is, whether Theorem~\ref{thm:hand} could also be obtained
from a centered disk. Additional Grunsky coefficients, further consequences of Bloch
normalization, or a different center construction may strengthen that
argument.

For the upper bound, Proposition~\ref{up:prop:core} shows that domains
built on Goodman's first two generations cannot go below $0.6551853$.
Theorem~\ref{thm:upper} is within $1.1\cdot10^{-3}$ of this floor, and
further arcs gain very little. Off-axis second-generation tips, discussed
at the end of Section~\ref{sec:upper}, are the natural candidates for a
substantially different domain. Neither approach presently establishes that
the known upper-bound domains are close to a global extremum.

\appendix
\section{Finite scalar estimates for the analytic bound}\label{app:scalar}

The proof of Theorem~\ref{thm:hand} uses \eqref{eq:smallchecks}, the six
table entries, and the three bounds $H_\lambda>22$. The two comparisons in
\eqref{eq:smallchecks} hold with margins above $3\cdot10^{-4}$. The table
entries hold with margins above $10^{-4}$; the smallest is
$E_0(1/3)-0.058\approx1.5\cdot10^{-4}$ for $\lambda=4$, while $E_0$ and $E_1$
themselves exceed $0.011$. The bounds for $H_\lambda$ hold with margins above
$0.1$. So five-digit decimals suffice. For completeness
the transcendental inputs can be enclosed with rational arithmetic. For
$0<v<1$ put
\[
 L(v)=2\sum_{j=0}^{7}\frac{v^{2j+1}}{2j+1},\qquad
 \epsilon(v)=\frac{2v^{17}}{17(1-v^2)},
\]
so that $L(v) \le \log((1+v)/(1-v)) \le L(v)+\epsilon(v)$. Writing
$2\artanh t=\log(1997/3)=9\log2+\log(1997/1536)$ and using
$v=1/3$ and $v=461/3533$ gives
\[
 0.0940845<q<0.0940846.
\]
The alternating Taylor sums of $e^{-5/8}$ through degrees $11$ and $10$ are
lower and upper bounds. Substitution gives
\begin{align*}
 \tfrac{13}{4}e^{-5/8}&<1.7396<1.7399<A_R,\\
 \tfrac92e^{-5/4}-1&>0.2892>0.2890>q+\tfrac{196}{1000}\,t^2,
\end{align*}
which is \eqref{eq:smallchecks}. The same substitution gives the table in
the proof and $H_6>41$, $H_5>31$, $H_4>22$. The bounds $q<t^2$,
$R+1/11<7/10$ and $H_\lambda>22$ give the geometric cutoffs used in the
proof. All decimals in these comparisons denote exact terminating
rationals.

The script \path{check-hand-bound.py} in the archive \cite{WikArchive}
performs these rational substitutions with Python's standard
\texttt{fractions} module. It
only checks the arithmetic: the proof consists of the displayed inequalities
and involves no adaptive subdivision. The archive also contains a Lean~4
formalization. It verifies these estimates from Mathlib's bounds for the
exponential and the logarithm, and proves Theorem~\ref{thm:hand} with the
Grunsky inequality \eqref{eq:M} and the Fekete--Szeg\H{o} inequality as
explicit hypotheses.

\section{Proof of Proposition~\ref{prop:centered}}\label{app:centered}

Expansion at the origin gives
$f_0(z)=z-\frac{\eta^2}{3} z^3 - \frac{a\eta^3}{2} z^4+\cdots$. Put
$w(z)=-z^3(a+z)/(1+az)$. Since $(a+z)/(1+az)$ is an automorphism of $\D$,
$\abs{w(z)} \le \abs{z}^3$, and a direct computation gives
\[
 H(z)=\frac{1+w(z)}{\bigl(1-w(z)\bigr)(1+z^2)} .
\]
The function $g(z)=\eta^{-1}\arctan(\eta z)$ is convex in $\abs{z}<1/\eta$,
since $\Real(1+zg''/g')=\Real\bigl((1-\eta^2z^2)/(1+\eta^2z^2)\bigr)>0$ there,
and $f_0'/g'=(1+w(\eta z))/(1-w(\eta z))$ has positive real part. So $f_0\circ
g^{-1}$ has a derivative with positive real part on a convex domain and is
therefore injective. Thus $f_0$ is univalent in $\abs{z}<1/\eta$; in particular
$f_0(1)\notin f_0(\D)$.

Write $H=N/D$. The inequality $(1-\abs{z}^2)\abs{f_0'(z)} \le 1$ on $\D$ is
equivalent to
\[
 \abs{D(\eta z)}^2-(1-\abs{z}^2)^2\abs{N(\eta z)}^2 \ge 0 .
\]
The left side is even in $\Imag z$. With $r=\abs{z}$ and $\Real z=r(2\tau-1)$ it
becomes $r^2$ times a polynomial of bidegree $(10,6)$ in $(r,\tau)\in[0,1]^2$.
Subdividing the square into $65$ rectangles, all Bernstein coefficients of this
polynomial are positive. This is checked in exact rational arithmetic
\cite{WikArchive}. Hence the Bloch inequality holds, strictly for $z\ne0$, and
$f_0\in\B$.

Finally, the numerator of $H'(z)$ is
$2z(25z^8+20z^7+4z^6-30z^5-108z^4-60z^3-112z^2-50z-25)$, which is negative for
$0<z \le 1$, because the three positive terms are at most $49z^2$. So $H$
decreases on $[0,1]$, and a left Riemann sum with $16384$ intervals, evaluated
in exact arithmetic with upward rounding, gives
$f_0(1)=\int_0^1H(\eta t)\,dt<0.599847$.\qed

\begingroup
\raggedright

\endgroup

\end{document}